\documentclass[12pt]{amsart}
\usepackage{amsfonts}
\usepackage{amsfonts,latexsym,rawfonts,amsmath,amssymb,amsthm}
\usepackage[plainpages=false]{hyperref}
\usepackage{graphicx}

\numberwithin{equation}{section}

\RequirePackage{color}
 \textwidth = 6.2 in
 \textheight = 8.25 in
 \oddsidemargin = 0.25 in
 \evensidemargin = 0.25 in
\voffset=-20pt
\theoremstyle{plain}

\newtheorem{theorem}{Theorem}[section]

\newtheorem{definition}{Definition}[section]

\newtheorem{lemma}[theorem]{Lemma}
\newtheorem{proposition}[theorem]{Proposition}
\newtheorem{remark}[theorem]{Remark}

\newcommand{\beq}{\begin{equation}}
\newcommand{\eeq}{\end{equation}}
\newcommand{\beqs}{\begin{eqnarray*}}
\newcommand{\eeqs}{\end{eqnarray*}}
\newcommand{\beqn}{\begin{eqnarray}}
\newcommand{\eeqn}{\end{eqnarray}}
\newcommand{\beqa}{\begin{array}}
\newcommand{\eeqa}{\end{array}}

\def\phi{\varphi}

\begin{document}
\title{A Class of  Hessian quotient equations in Euclidean space}

\author{Xiaojuan Chen}
\address{Faculty of Mathematics and Statistics, Hubei Key Laboratory of Applied Mathematics, Hubei University,  Wuhan 430062, P.R. China}
\email{201911110410741@stu.hubu.edu.cn}

\author{Qiang Tu$^\ast$}
\address{Faculty of Mathematics and Statistics, Hubei Key Laboratory of Applied Mathematics, Hubei University,  Wuhan 430062, P.R. China}
\email{qiangtu@hubu.edu.cn}

\author{Ni  Xiang}
\address{Faculty of Mathematics and Statistics, Hubei Key Laboratory of Applied Mathematics, Hubei University,  Wuhan 430062, P.R. China}
\email{nixiang@hubu.edu.cn}

\keywords{Prescribed Weingarten curvature; Hessian quotient; $(\eta, k)$-convex; Star-shaped.}

\subjclass[2010]{Primary 35J96, 52A39; Secondary 53A05.}

\thanks{This research was supported by funds from Hubei Provincial Department of Education
Key Projects D20181003 and the National Natural Science Foundation of China No.11971157.}
\thanks{$\ast$ Corresponding author}

\begin{abstract}
In this paper, we consider a Class of  Hessian quotient equations in Euclidean space. Under some sufficient condition, we obtain an
existence result by the standard degree theory based on the  prior
estimates for the solutions to the Hessian quotient equations.
\end{abstract}

\maketitle

\baselineskip18pt

\parskip3pt

 \section{Introduction}

Let $(M,g)$  be  a smooth, compact Riemannian manifold of dimension $n\geq 3$.
Define a  $(0,2)$ tensor $\eta$  on $M$ by
$$\eta_{ij}=Hg_{ij}-h_{ij},$$
where $g_{ij}$ and $h_{ij}$ are the first and second fundamental forms of $M$ respectively, $H(X)$ is the mean curvature at $X \in M$. In fact, $\eta$ is the first Newton transformation of $h$ with respect to $g$. The $\sigma_k$-curvature of $\eta$ is defined by
$$\sigma_k(\lambda(\eta)),$$
where $\sigma_k(\lambda(\eta))$ means $\sigma_k$ is applied to the eigenvalues of $g^{-1}\eta$ and the $k$-th elementary symmetric polynomial $\sigma_k$ is defined by:
\[\sigma_k(\lambda)=\sum_{1\le i_1<\cdots<i_k\le n}\lambda_{i_1}\cdots\lambda_{i_k}.\]
In this paper, we study the problem of prescribed Weingarten
curvature with $(k,l)$-Hessian quotient of $\lambda(\eta)$
\begin{eqnarray}\label{ht-Eq}
\frac{\sigma_k(\lambda(\eta))}{\sigma_l(\lambda(\eta))}=f(X, \nu), \quad 2\leq k\leq n,~ 0
\leq  l\leq k-2,
\end{eqnarray}
on a closed Riemannian manifold $M$, where $M$ is an embedded,
closed manifold in $\mathbb{R}^{n+1}$, $f$ is given smooth functions in $\mathbb{R}^{n+1}\times \mathbb{S}^n$.  $\nu(X)$ and $\kappa(X)=(\kappa_1(X), \cdots, \kappa_n(X))$  is the unit outer normal and the principal curbatures
of hypersurface at $X$. Note that
\begin{equation}
\lambda_i(\eta)=H-\kappa_i=\sum_{j\neq i} \kappa_j, \quad \quad \forall~ i =1,\cdots, n.
\end{equation}
To ensure the ellipticity of \eqref{ht-Eq}, we have to restrict the
class of  hypersurfaces.
\begin{definition}
A smooth hypersurface $M \subset \mathbb{R}^{n+1}$ is called
$(\eta, k)$-convex if $\lambda(\eta) \in \Gamma_k$ for any $X\in M$,
 where $\Gamma_k$ is the Garding's cone
\begin{eqnarray*}\label{cone}
\Gamma_{k}=\{\lambda \in \mathbb{R} ^n: \sigma_{j}(\lambda)>0, \forall ~ 1\leq j \leq k\}.
\end{eqnarray*}
\end{definition}
We mainly get the following theorem.

\begin{theorem}\label{Main}
Let $n\ge3, k\ge 2$, $0\leq l<k-1$ and $f\in C^2((\overline{B}_{r_2}\backslash B_{r_1}) \times \mathbb{S}^n)$ be a positive function. Assume that
\begin{eqnarray}\label{ASS1}
f(X, \frac{X}{|X|}) \leq \frac{C_n^k}{C_n^l}  (\frac{n-1}{r_2})^{k-l}
\quad \mbox{for} \quad |X|= r_2,
\end{eqnarray}
\begin{eqnarray}\label{ASS2}
f(X, \frac{X}{|X|}) \geq \frac{C_n^k}{C_n^l}  (\frac{n-1}{r_1})^{k-l} \quad \mbox{for} \quad |X|= r_1,
\end{eqnarray}
and
\begin{eqnarray}\label{ASS3}
\frac{\partial }{\partial \rho}\bigg[\rho^{k-l}f(X, \nu)\bigg]\leq
0 \quad \mbox{for} \quad r_1\leq |X|\leq r_2,
\end{eqnarray}
where $\rho=|X|$. Then there exists a $C^{4, \alpha}$,
$(\eta, k)$-convex, star-shaped and closed hypersurface $M$ in $\{r_1\leq
|X|\leq r_2\}$ satisfies equation (\ref{ht-Eq}) for any $\alpha\in (0,1)$.
\end{theorem}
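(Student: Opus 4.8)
\emph{Reduction.} The plan is to write \eqref{ht-Eq} as a scalar equation on the sphere. A star-shaped, $(\eta,k)$-convex hypersurface in $\{r_1\le|X|\le r_2\}$ is the radial graph $X=\rho(z)z$ of some $\rho:\mathbb{S}^n\to[r_1,r_2]$; expressing the normal $\nu$, the induced metric and the second fundamental form $h_{ij}$ of such a graph through $\rho$ and its covariant derivatives, and using $\eta_{ij}=Hg_{ij}-h_{ij}$, the equation becomes
$$G[\rho]:=\frac{\sigma_k\big(\lambda(\eta)\big)}{\sigma_l\big(\lambda(\eta)\big)}-f(\rho z,\nu)=0\qquad\text{on }\mathbb{S}^n.$$
Since $A\mapsto(\mathrm{tr}\,A)I-A$ is affine and $(\sigma_k/\sigma_l)^{1/(k-l)}$ is concave and monotone on $\Gamma_k$, I note that $G$ is uniformly elliptic and (after raising to the power $1/(k-l)$) concave in $\nabla^2\rho$ along the $(\eta,k)$-convex graphs, which is what makes Leray--Schauder degree theory and the Evans--Krylov theorem applicable. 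I would then introduce a continuity family $f^t=(1-t)f_0+tf$, $t\in[0,1]$, with $f_0$ a radial barrier chosen below, and solve \eqref{ht-Eq} by showing the degree of $G^t$ is nonzero.

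\emph{A priori estimates.} These, uniform in $t$, are the substance of the argument. For $C^0$: at a maximum point of $\rho$ the normal is radial and $\kappa_i\ge 1/\rho_{\max}$, hence $\lambda_i(\eta)=\sum_{j\neq i}\kappa_j\ge(n-1)/\rho_{\max}$; monotonicity of $\sigma_k/\sigma_l$ on $\Gamma_k$ then gives $f(\rho_{\max}z_0,z_0)\ge\frac{C_n^k}{C_n^l}\big((n-1)/\rho_{\max}\big)^{k-l}$, and combining this with \eqref{ASS1} and the monotonicity \eqref{ASS3} of $\rho^{k-l}f$ forces $\rho_{\max}\le r_2$; a symmetric argument at a minimum point using \eqref{ASS2}, \eqref{ASS3} yields $\rho_{\min}\ge r_1$. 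For $C^1$: I would apply the maximum principle to an auxiliary function built from the support function $\langle X,\nu\rangle$ and $\rho$, using the $C^0$ bounds and \eqref{ASS3} to obtain $\langle X,\nu\rangle\ge c\rho$, i.e. a uniform gradient bound $|\nabla\rho|\le C$ and uniform star-shapedness. For $C^2$: I would bound the largest eigenvalue of $(h_{ij})$ (equivalently of $\eta$) via the maximum principle applied to $\log\lambda_{\max}(h)+\Phi(\langle X,\nu\rangle)$ for a suitable $\Phi$, using the concavity of $(\sigma_k/\sigma_l)^{1/(k-l)}$ to absorb the third-order terms; the $C^0$--$C^1$ bounds and $f\ge c_0>0$ then confine $\lambda(\eta)$ to a fixed compact subset of $\Gamma_k$, giving uniform ellipticity and $\|\nabla^2\rho\|\le C$. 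Finally Evans--Krylov gives a uniform $C^{2,\alpha}$ bound and, since $f\in C^2$, Schauder theory yields the uniform $C^{4,\alpha}$ bound claimed in the theorem.

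\emph{Degree computation.} For $f_0$ I would take a radial function $g(|X|)$ satisfying the strict forms of \eqref{ASS1}--\eqref{ASS3}, arranged so that the model equation $\sigma_k(\lambda(\eta))/\sigma_l(\lambda(\eta))=g(|X|)$ has a \emph{unique} star-shaped solution --- a sphere of some radius $R^\ast\in(r_1,r_2)$; indeed the $C^0$ argument above, run with strict monotonicity, gives $\rho_{\min}\ge R^\ast\ge\rho_{\max}$ --- and so that the linearized operator at that sphere (a perturbation of $\Delta_{\mathbb{S}^n}$ whose constant zeroth-order coefficient, controlled by $\partial_\rho(\rho^{k-l}f_0)<0$, can be kept off the spectrum of $-\Delta_{\mathbb{S}^n}$) is invertible. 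Since \eqref{ASS1}--\eqref{ASS3} and positivity persist along $f^t$, the uniform estimates show the set of $(\eta,k)$-convex star-shaped solutions of $G^t[\rho]=0$ lies in a fixed ball of $C^{4,\alpha}(\mathbb{S}^n)$, bounded away from $\partial B_{r_1}$, $\partial B_{r_2}$ and from non-$(\eta,k)$-convex graphs. Hence the Leray--Schauder degree of the associated compact perturbation of the identity on a suitable open set is well defined and $t$-independent; it equals $\pm1$ at $t=0$ by non-degeneracy of the model, so it is nonzero at $t=1$, and elliptic regularity turns the resulting $\rho$ into the asserted $C^{4,\alpha}$, $(\eta,k)$-convex, star-shaped closed hypersurface.

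\emph{Main obstacle.} The hard part will be the $C^2$ estimate: controlling the sign of the commutator terms and the lower-order errors produced by differentiating the quotient $\sigma_k/\sigma_l$ twice and by the $\nu$-dependence of $f$, where one must use both the monotonicity \eqref{ASS3} and the special structure $\lambda_i(\eta)=\sum_{j\neq i}\kappa_j$ that makes the operator better behaved than a direct Weingarten-curvature operator. A secondary technical point is the choice of $f_0$ ensuring that the model equation has a unique, non-degenerate spherical solution, which is what pins down the value of the degree.
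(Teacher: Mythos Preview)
Your overall architecture---continuity/degree theory with a radial model at $t=0$, $C^0$--$C^1$--$C^2$ a priori estimates, then Evans--Krylov and Schauder---is exactly the paper's. A few points where your sketch drifts from what actually carries the argument are worth flagging. First, the assumption \eqref{ASS3} is used in the paper only for the \emph{gradient} estimate (to turn the bad term $-(\tilde f+\langle X,e_1\rangle\,d_X\tilde f(e_1))/u$ into a bounded one), not for $C^0$ or $C^2$; your $C^0$ argument does not need it, and your $C^2$ paragraph should not invoke it. Second, for the $C^2$ estimate the test function $\log\kappa_{\max}+\Phi(\langle X,\nu\rangle)$ is not enough: the paper uses $\log\kappa_{\max}-\log(u-a)+\tfrac{A}{2}|X|^2$, and the $|X|^2$ term is essential in the ``nearly umbilic'' case ($|h_{ii}|\le\delta h_{11}$ for $i\ge2$), where it is the resulting $A\sum_i F^{ii}\gtrsim h_{11}^{\,k-1-l}$ that beats $C_0h_{11}$; the $-\log(u-a)$ term handles the complementary case. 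Third, ``concavity absorbs the third-order terms'' is too optimistic: concavity only gives $-G^{ij,rs}\eta_{ij1}\eta_{rs1}\ge -2\sum_{i\ge2}G^{1i,i1}h_{11i}^2$, and closing the estimate requires the exact identity $-G^{1i,i1}=(G^{11}-G^{ii})/(\eta_{ii}-\eta_{11})$ together with a bootstrap (first bound $|h_{ii}|$ for $i\ge2$ in terms of $A$, then feed this back to control $\sum_{i\ge2}F^{ii}h_{11i}^2/h_{11}^2$). Finally, the paper's specific choice $f_0(X)=\tfrac{C_n^k}{C_n^l}(n-1)^{k-l}\big(|X|^{-(k-l)}+\epsilon(|X|^{-(k-l)}-1)\big)$ makes the unique solution the unit sphere with a strictly positive zeroth-order coefficient in the linearization, which pins the degree to $\pm1$; your abstract ``radial $g$ with invertible linearization'' is the same idea, but you will want a concrete $f_0$ of this type to actually compute.
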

\begin{remark}
The key to proving theorem \ref{Main} is to obtain the curvature estimate for
this  Hessian quotient type equation (\ref{ht-Eq}), which is established in Theorem 
 \ref{ht-C2e}. If we replace $\lambda(\eta)$ by $\kappa(X)$, Guan-Ren-Wang showed that the $C^2$ estimate fails for the quotient of curvature equation. 
\end{remark}

This kind of equations is motivated from the study of many important geometric
problems. For example, when $k=n$, \eqref{ht-Eq} becomes the following equation for $(\eta, n)$-convex hypersurface:
\begin{eqnarray}\label{ht-Eq-8}
\mbox{det} (\eta(X))=f(X, \nu),
\end{eqnarray}
which has been studied intensively by Sha \cite{Sha1, Sha2}, Wu\cite{Wu} and Harvey-Lawson \cite{HL2}. It is interesing to consider the curvature equation
\eqref{ht-Eq-8} and its generalization. In \cite{CJ}, Chu-Jiao
establish the curvature estimates  for  the equation, which
replace the left hand of \eqref{ht-Eq-8} by $\sigma_k(\eta(X))$.
It's worth noting that Theorem \ref{Main} recovers the existence results in \cite{CJ}. In the complex setting, when $k=n, l=0$, the equation \eqref{ht-Eq}  is called $(n-1)$ Monge-Amp$\grave{e}$re equation, which is related to the Gauduchon conjecture in complex geometry, more details see \cite{Ga}.

If we replace $\lambda(\eta)$ by $\kappa(X)$ and $l=0$ in \eqref{ht-Eq}, the equation \eqref{ht-Eq} becomes the classical prescribed curvature equation
\begin{eqnarray}\label{ht-Eq-9}
\sigma_k(\kappa(X))=f(X, \nu),
\end{eqnarray}
which has been widely studied in the past two decades.
In fact, the curvature estimates are the key part for this prescribed curvature euqation. When $k=n$, the curvature estimates are established by
Caffarelli-Nirenberg-Spruck \cite{Ca1}.
When $k=2$, the $C^2$ estimate for the equation \eqref{ht-Eq-9} was obtained by Guan-Ren-Wang \cite{Guan-Ren15}.  Spruck-Xiao \cite{Sp} extended $2$-convex case to space forms and give a simple proof for the Euclidean case. In\cite{Ren, Ren1}, Ren-Wang proved the $C^2$ estimate for $k=n-1$ and $n-2$
When $2<k<n$, $C^2$ estimate was also proved for equation of prescribing curvature
measures problem in \cite{Guan12, Guan09}, where $f(X, \nu) = \langle X, \nu \rangle \Tilde{f}(X)$.
Ivochkina \cite{Iv1,Iv2} considered the Dirichlet problem of the above equation  on domains in $\mathbb{R}^n$, and obtained $C^2$ estimates under some extra conditions on  the dependence of $f$ on $\nu$.
Caffarelli-Nirenberg-Spruck \cite{Ca} and Guan-Guan \cite{Guan02} proved the $C^2$ estimate if $f$ is independent of $\nu$ and depends only on $\nu$, respepectively. Moreover, Some results have been obtained by Li-Oliker \cite{Li-Ol} on unit sphere, Barbosa-de Lira-Oliker \cite{Ba-Li} on space forms, Jin-Li \cite{Jin} on hyperbolic space, Andrade-Barbosa-de Lira \cite{An} on warped product manifolds.

The organization of the paper is as follows.
In Sect. 2
we start with some preliminaries.
$C^0$, $C^1$ and $C^2$ estimates are given in Sect. 3.
In Sect. 4 we prove theorem \ref{Main}.


\section{Preliminaries}

\subsection{Setting and General facts}
For later convenience, we first state our conventions on Riemann
Curvature tensor and derivative notation. Let $M$ be a smooth
manifold and $g$ be a Riemannian metric on $M$ with Levi-Civita
connection $\nabla$. For a $(s, r)$-tensor field $\alpha$ on $M$,
its covariant derivative $\nabla \alpha$ is a $(s, r+1)$-tensor
field given by
\begin{eqnarray*}
&&\nabla \alpha(Y^1, .., Y^s, X_1, ..., X_r, X)
\\&=&\nabla_{X} \alpha(Y^1, .., Y^s, X_1, ..., X_r)\\&=&X(\alpha(Y^1, .., Y^s, X_1, ..., X_r))-
\alpha(\nabla_X Y^1, .., Y^s, X_1, ..., X_r)\\&&-...-\alpha(Y^1, ..,
Y^s, X_1, ..., \nabla_X  X_r).
\end{eqnarray*}
The coordinate expression of which is denoted by
$$\nabla \alpha=(\alpha_{k_{1}\cdot\cdot\cdot
k_{r}; k_{r+1}}^{l_{1}\cdot\cdot\cdot l_{s}}).$$ We can continue to
define the second covariant derivative of $\alpha$ as follows:
\begin{eqnarray*}
&&\nabla^2 \alpha(Y^1, .., Y^s, X_1, ..., X_r, X, Y)
=(\nabla_{Y}(\nabla\alpha))(Y^1, .., Y^s, X_1, ..., X_r, X).
\end{eqnarray*}
The coordinate expression of which is denoted by
$$\nabla^2 \alpha=(\alpha_{k_{1}\cdot\cdot\cdot
k_{r}; k_{r+1}k_{r+2}}^{l_{1}\cdot\cdot\cdot l_{s}}).$$ Similarly,
we can also define the higher order covariant derivative of
$\alpha$:
$$\nabla^3 \alpha=\nabla(\nabla^2 \alpha), \nabla^4 \alpha=\nabla(\nabla^3 \alpha), ... ,$$
and so on. For simplicity, the coordinate expression of the
covariant differentiation will usually be denoted by indices without
semicolons, e.g.,
$$u_{i}, \quad u_{ij} \quad \mbox{or} \quad
u_{ijk}$$
 for a function $u: M\rightarrow \mathbb{R}$.

Our convention for the Riemannian curvature $(3,1)$-tensor Rm is
defined by
\begin{equation*}
Rm(X, Y)Z=-\nabla_{X}\nabla_{Y}Z+\nabla_{Y}\nabla_{X}Z+\nabla_{[X,
Y]}Z.
\end{equation*}
Pick a local coordinate chart $\{x^i\}_{i=1}^{n}$ of $M$. The
component of the $(3,1)$-tensor $Rm$ is defined by
\begin{equation*}
Rm\bigg({\frac{\partial}{\partial x^i}}, {\frac{\partial}{\partial
x^j}}\bigg){\frac{\partial}{\partial x^k}}=R_{ijk}^{\ \ \
l}{\frac{\partial}{\partial x^l}}
\end{equation*}
and $R_{ijkl}=g_{lm}R_{ijk}^{\ \ \ m}$. Then, we have the
standard commutation formulas (Ricci identities):
\begin{eqnarray*}\label{RI}
\alpha_{k_{1}\cdot\cdot\cdot k_{r};\ j i}^{l_{1}\cdot\cdot\cdot
l_{s}}-\alpha_{k_{1}\cdot\cdot\cdot k_{r};\ i
j}^{l_{1}\cdot\cdot\cdot l_{s}}=\sum_{a=1}^{r}R^{\ \ \ m}_{ijk_{l}}
\alpha_{k_{1}\cdot\cdot\cdot k_{a-1}m k_{a+1}\cdot\cdot\cdot
k_{r}}^{l_{1}\cdot\cdot\cdot l_{s}}-\sum_{b=1}^{s}R^{\ \ \
l_b}_{ijm} \alpha_{k_{1}\cdot\cdot\cdot k_{r}}^{l_{1}\cdot\cdot\cdot
l_{b-1}m l_{b+1}\cdot\cdot\cdot l_{r}}.
\end{eqnarray*}

Let $M$ be an immersed hypersurface in $\mathbb{R}^{n+1}$. Denote
$R_{ijkl}$ to be the Riemannian curvature of $M\subset
\mathbb{R}^{n+1}$ with the induced metric $g$. Pick a local
coordinate chart $\{x^i\}_{i=1}^{n}$ on $M$. Let $\nu$ be a given
unit normal and $h_{ij}$ be the second fundamental form $A$ of the
hypersurface with respect to $\nu$, that is
$$h_{ij}=-\langle\frac{\partial^2 X}{\partial x^i\partial x^j}, \nu\rangle_{\mathbb{R}^{n+1}}.$$
Recalling the following identities
\begin{equation}\label{Gauss for}
\nabla_i \nabla_j X=-h_{ij}\nu, \quad \quad \mbox{Gauss formula}
\end{equation}

\begin{equation}\label{Wein for}
\nabla_i \nu=h_{ij}X^j, \quad \quad \mbox{Weingarten formula}
\end{equation}

\begin{equation*}\label{Gauss}
R_{ijkl}=h_{ik}h_{jl}-h_{il}h_{jk}, \quad \quad \mbox{Gauss
equation}
\end{equation*}

\begin{equation}\label{Codazzi}
\nabla_{k}h_{ij}=\nabla_{j}h_{ik}, \quad \quad \mbox{Codazzi
equation}
\end{equation}
where $X^j=g^{ik}\nabla_i X$.
Moreover, we have
\begin{eqnarray}\label{2rd}
\nabla_{i}\nabla_{j}h_{kl}
&=&\nabla_{k}\nabla_{l}h_{ij}+h^{m}_{j}(h_{il}h_{km}-h_{im}h_{kl})+h^{m}_{l}(h_{ij}h_{km}-h_{im}h_{kj}).
\end{eqnarray}

\subsection{Star-shaped hypersurfaces in $\mathbb{R}^{n+1}$}

Let $M$ be a star-shaped hypersurface in $\mathbb{R}^{n+1}$ which can represented by
\begin{eqnarray*}
X(x)=\rho(x)x, \quad \mbox{for} \quad x \in \mathbb{S}^n,
\end{eqnarray*}
where $X$ is the position vector of the hypersurface $M$ in $\mathbb{R}^{n+1}$.

Let $\{e_1,...,e_n\}$ be a smooth local orthonormal frame field on
$\mathbb{S}^n$ and $e_\rho$ be the radial vector field
in $\mathbb{R}^{n+1}$. $D_i\rho=D_{e_i} \rho$, $D_iD_j\rho=D^2
\rho(e_i, e_j)$ denote the covariant derivatives of $u$ with respect
to the round metric $\sigma$ of $\mathbb{S}^n$. Then, the following formulas
hold:

(i) The tangential vector on $M$ is
\begin{eqnarray*}
X_{i}=\rho e_{i}+D_i\rho e_{\rho}
\end{eqnarray*}
and the corresponding outward unit normal vector is given by
\begin{eqnarray}\label{Nor}
\nu=\frac{1}{v}\left(e_\rho-\frac{1}{\rho^2} D^j\rho e_j\right),
\end{eqnarray}
where $v=\sqrt{1+\rho^{-2}|D \rho|^2}$ with $D^j \rho=\sigma^{ij}D_i\rho$.

(ii) The induced metric $g$ on $M$ has the form
\begin{equation*}
g_{ij}=\rho^2\sigma_{ij}+D_i\rho D_j\rho
\end{equation*}
and its inverse is given by
\begin{equation*}
g^{ij}=\frac{1}{\rho^2}\left(\sigma^{ij}-\frac{D^i\rho D^j\rho}{\rho^2
v^{2}}\right).
\end{equation*}

(iii) The second fundamental form of $M$ is given by
\begin{eqnarray*}
h_{ij}=\frac{1}{v}\left(-D_iD_j\rho+\rho
\sigma_{ij}+\frac{2}{\rho}D_i\rho D_j\rho\right)
\end{eqnarray*}
and
\begin{eqnarray}\label{h_ij}
h^{i}_{j}=\frac{1}{\rho
v}\left(\delta^{i}_{j}+[-\sigma^{ik}+\frac{D^i\rho
D^k\rho}{\rho^2v}]D_jD_k(\log \rho)\right).
\end{eqnarray}

The following Newton-Maclaurin inequality (see \cite{Tr90, LT94}) will be used frequently.
\begin{lemma} \label{lemma1}
\textit{\ Let} $\lambda\in\mathbb{R}^n$. \textit{ For }$0\leq
l<k\leq n,$ $r>s\ge0, k\ge r, l\ge s$, \textit{\ the following is
the Newton-Maclaurin inequality }

(1)
\[
k(n-l+1)\sigma_{l-1}(\lambda)\sigma_{k}(\lambda)\leq
l(n-k+1)\sigma_{l}(\lambda)\sigma_{k-1}(\lambda).
\]

(2)
\[\big[\frac{\sigma_k(\lambda)/C^k_n}{\sigma_l(\lambda)/C^l_n}\big]^\frac{1}{k-l}
\le\big[\frac{\sigma_r(\lambda)/C^r_n}{\sigma_s(\lambda)/C^s_n}\big]^{\frac{1}{r-s}}
\quad\textit{for }  \lambda\in \Gamma_k.\]
\end{lemma}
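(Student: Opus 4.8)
The plan is to reduce both inequalities to the normalized elementary symmetric functions $S_j:=\sigma_j(\lambda)/C_n^j$ (so $S_0=1$), for which the classical \emph{Newton inequality}
$$S_{j-1}(\lambda)\,S_{j+1}(\lambda)\le S_j(\lambda)^2,\qquad 1\le j\le n-1,$$
holds for every $\lambda\in\mathbb R^n$. I would first record this, either by citing it from \cite{Tr90,LT94} or via the standard short argument: the polynomial $f(t)=\prod_{i=1}^n(1+\lambda_i t)=\sum_{j=0}^n C_n^j S_j\,t^j$ has only real roots, and real-rootedness is preserved under differentiation (Rolle) and under the reversal $g(t)\mapsto t^{\deg g}g(1/t)$; differentiating $f$ exactly $j-1$ times, reversing, and differentiating $n-j-1$ more times produces a real-rooted quadratic $\alpha t^2+\beta t+\gamma$ whose coefficients are proportional, by explicit factorial constants, to $S_{j+1},S_j,S_{j-1}$, and its nonnegative discriminant is exactly the asserted inequality. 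The only bookkeeping is the identity $(n-j+1)C_n^{j-1}=jC_n^j$, which is also what converts normalized inequalities back into the form stated in the lemma.

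The second ingredient is the \emph{concavity of $j\mapsto\log S_j$ on $\{0,1,\dots,k\}$ when $\lambda\in\Gamma_k$}. On $\Gamma_k$ all of $S_0,\dots,S_k$ are positive, so $a_j:=\log S_j$ is well defined for $0\le j\le k$, and Newton's inequality gives $a_{j-1}+a_{j+1}\le 2a_j$ for $1\le j\le k-1$; equivalently the forward differences $d_j:=a_{j+1}-a_j$ are non-increasing in $j$. Since for $0\le i<j\le k$ the secant slope $\dfrac{a_j-a_i}{j-i}$ equals the arithmetic mean of $d_i,d_{i+1},\dots,d_{j-1}$, monotonicity of $(d_m)$ forces this slope to be non-increasing when the right endpoint $j$ increases (one adjoins a term no larger than the current mean) and non-increasing when the left endpoint $i$ increases (one deletes a term no smaller than the current mean).

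Inequality (1) is then the one-step case: assuming $l\ge1$ (for $l=0$ the inequality reads $0\le0$ under the convention $\sigma_{-1}=0$), the indices $l-1$ and $k-1$ lie in $\{0,\dots,k-1\}$, so $d_{l-1}\ge d_{k-1}$, i.e. $S_l/S_{l-1}\ge S_k/S_{k-1}$, hence $S_{l-1}S_k\le S_lS_{k-1}$; substituting $\sigma_j=C_n^jS_j$ and clearing binomial factors via $(n-j+1)C_n^{j-1}=jC_n^j$ gives $k(n-l+1)\sigma_{l-1}\sigma_k\le l(n-k+1)\sigma_l\sigma_{k-1}$. For (2), the hypotheses give $0\le s\le l<k$ and $s<r\le k$, so all four indices lie in $\{0,\dots,k\}$, and applying the secant-slope monotonicity twice,
$$\frac{a_k-a_l}{k-l}\le\frac{a_k-a_s}{k-s}\le\frac{a_r-a_s}{r-s},$$
where the first step moves the left endpoint from $s$ up to $l$ and the second moves the right endpoint from $r$ up to $k$. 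Rewriting in terms of $S_j$ and exponentiating yields $\bigl(S_k/S_l\bigr)^{1/(k-l)}\le\bigl(S_r/S_s\bigr)^{1/(r-s)}$, which is (2) after substituting $S_j=\sigma_j/C_n^j$. The only genuine obstacle is a fully self-contained proof of Newton's inequality $S_{j-1}S_{j+1}\le S_j^2$ (the real-rootedness/Rolle argument plus the constant bookkeeping); granting it, the passage to (1) and (2) is elementary, resting only on the concavity of $\log S_j$ on $\Gamma_k$ and on averaging a monotone sequence, so in practice one cites the basic inequality and presents only these two derivations.
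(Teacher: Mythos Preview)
The paper does not prove this lemma at all; it simply records it as a standard fact with references to \cite{Tr90} and \cite{LT94}. Your proposal therefore goes well beyond what the paper offers, supplying the classical derivation: Newton's inequality $S_{j-1}S_{j+1}\le S_j^2$ for the normalized symmetric functions $S_j=\sigma_j/C_n^j$, the resulting discrete concavity of $j\mapsto\log S_j$ on $\{0,\dots,k\}$ when $\lambda\in\Gamma_k$, and the secant-slope monotonicity argument. Your deduction of (2) is correct, and your reduction of (1) to $S_{l-1}S_k\le S_lS_{k-1}$ together with the binomial identity $(n-j+1)C_n^{j-1}=jC_n^j$ is accurate.

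One point deserves comment. Your argument for (1) passes through the logarithms $a_j=\log S_j$ and hence tacitly assumes $\lambda\in\Gamma_k$. As literally stated, the paper asserts (1) for every $\lambda\in\mathbb R^n$, but that is false in general: for $n=3$, $l=1$, $k=3$ and $\lambda=(1,-1,-1)$ one gets $\sigma_0=1$, $\sigma_1=-1$, $\sigma_2=-1$, $\sigma_3=1$, and (1) reads $9\le 1$. What you have actually proved is the correct version, valid on $\Gamma_k$, and that is all the paper ever needs, since throughout the a priori estimates one has $\lambda(\eta)\in\Gamma_k$ by $(\eta,k)$-convexity. So the discrepancy is a minor inaccuracy in the paper's statement rather than a gap in your proof; it would be worth adding one sentence making the $\Gamma_k$ hypothesis for (1) explicit.
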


For convenience, we introduce the following notations:
\begin{eqnarray}
G(\eta):= \left(\frac{\sigma_k(\eta)}{\sigma_l(\eta)}\right)^{\frac{1}{k-l}},\quad G^{ij}:=\frac{\partial G}{ \partial \eta_{ij}}, \quad
G^{ij, rs}:= \frac{\partial^2 G}{\partial \eta_{ij} \partial \eta_{rs}}, F^{ii}:=\sum_{k\neq i} G^{kk}.
\end{eqnarray}
Thus,
$$G^{ii}= \frac{1}{k-l} \left(\frac{\sigma_k(\eta)}{\sigma_l(\eta)}\right)^{\frac{1}{k-l}-1} \frac{\sigma_{k-1}(\eta| i)\sigma_l(\eta)-\sigma_k(\eta)\sigma_{l-1}(\eta| i)}{\sigma_l^2(\eta)}.$$
If $\eta=\mbox{diag}(\mu_1, \mu_2, \cdots, \mu_n)$ with $\mu_1 \leq \mu_2\leq \cdots \leq \mu_n$. It follows that
$$G^{11}\geq G^{22} \geq \cdots \geq G^{nn}, \quad F^{11} \leq F^{22} \leq \cdots \leq F^{nn}.$$

To handle the ellipticity of the equation \eqref{ht-Eq}, we need the
following important proposition and its proof is the same as
Proposition 2.2.3 in \cite{CQ1}.
\begin{proposition}\label{ellipticconcave}
Let $M$ be a smooth $(\eta, k)$-convex closed hypersurface in $\mathbb{R}^{n+1}$
and $0\leq l< k-1$. Then the operator
\begin{eqnarray}
G(\eta_{ij}(X))=\left(\frac{\sigma_k(\lambda(\eta))}{\sigma_{l}(\lambda(\eta))}\right)^{\frac{1}{k-l}}
\end{eqnarray}
is elliptic and concave with respect to $\eta_{ij}(X)$. Moreover we have
\begin{eqnarray}
\sum G^{ii} \geq \left(\frac{C_n^k}{C_n^l}\right)^{\frac{1}{k-l}}.
\end{eqnarray}
\end{proposition}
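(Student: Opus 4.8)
The plan is to prove concavity and ellipticity of $G(\eta)=(\sigma_k(\lambda(\eta))/\sigma_l(\lambda(\eta)))^{1/(k-l)}$ by viewing $G$ as a function of the symmetric matrix $\eta_{ij}$ and reducing to the well-known properties of $\sigma_k/\sigma_l$ on the Garding cone $\Gamma_k$. First I would recall the classical fact (Lieberman, Trudinger, or Spruck's lecture notes) that $(\sigma_k/\sigma_l)^{1/(k-l)}$ is a concave function on $\Gamma_k$ when $0\le l<k$, and that it is monotone increasing in each eigenvalue there; since our hypersurface is $(\eta,k)$-convex, $\lambda(\eta)\in\Gamma_k$ at every point, so these properties apply. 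Because the map from a symmetric matrix to its ordered eigenvalue vector carries a symmetric concave function of the eigenvalues to a concave function of the matrix (a theorem of Davis, or the direct argument in Caffarelli--Nirenberg--Spruck), concavity of $G$ in $\eta_{ij}$ follows. Ellipticity, i.e. positive definiteness of $(G^{ij})$, follows from strict monotonicity: at a diagonal $\eta=\mathrm{diag}(\mu_1,\dots,\mu_n)$ one computes $G^{ii}=\frac{1}{k-l}(\sigma_k/\sigma_l)^{1/(k-l)-1}\cdot\frac{\sigma_{k-1}(\eta|i)\sigma_l(\eta)-\sigma_k(\eta)\sigma_{l-1}(\eta|i)}{\sigma_l^2(\eta)}$, and the Newton--Maclaurin inequality from Lemma \ref{lemma1}(1) together with positivity of all $\sigma_j(\eta)$, $1\le j\le k$, forces each bracketed numerator to be positive. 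Since the argument is purely algebraic/pointwise and identical to Proposition 2.2.3 in \cite{CQ1}, I would simply invoke that reference for these two assertions.

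The substantive new point is the lower bound $\sum_i G^{ii}\ge (C_n^k/C_n^l)^{1/(k-l)}$. Here I would work at a point where $\eta$ is diagonalized with eigenvalues $\mu_1\le\cdots\le\mu_n\in\Gamma_k$, so that $\sum_i G^{ii}=\sum_i \frac{\partial G}{\partial \mu_i}$. By Euler's relation applied to the homogeneous degree-one function $G$, we have $\sum_i \mu_i\,\frac{\partial G}{\partial\mu_i}=G(\mu)$; but this alone does not give the bound, so instead I would use concavity directly. Concavity of $G$ on $\Gamma_k$ gives, for the particular direction $e=(1,1,\dots,1)\in\Gamma_k$,
\begin{eqnarray*}
G(e)\le G(\mu)+\sum_i \frac{\partial G}{\partial\mu_i}(\mu)\,(1-\mu_i)=G(\mu)+\sum_i \frac{\partial G}{\partial\mu_i}-\sum_i\mu_i\frac{\partial G}{\partial\mu_i}=\sum_i\frac{\partial G}{\partial\mu_i},
\end{eqnarray*}
where in the last step I used Euler's identity $\sum_i\mu_i\,\partial G/\partial\mu_i=G(\mu)$ (valid since $G$ is homogeneous of degree one). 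Finally $G(e)=G(\mathrm{Id})=(\sigma_k(1,\dots,1)/\sigma_l(1,\dots,1))^{1/(k-l)}=(C_n^k/C_n^l)^{1/(k-l)}$, which is exactly the claimed constant. This is clean once one notices that the tangent-plane-above-graph inequality of a concave homogeneous-degree-one function, evaluated at the vector of all ones, produces precisely $\sum_i G^{ii}$ on the right.

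The main obstacle, such as it is, is purely bookkeeping: one must make sure the concavity/monotonicity of $(\sigma_k/\sigma_l)^{1/(k-l)}$ is invoked on the correct cone (it holds on all of $\Gamma_k$ for $0\le l<k$, which is why the hypothesis $l<k-1$ — equivalently $l\le k-2$ — suffices, though in fact $l<k$ would already do for this proposition) and that the eigenvalue-to-matrix passage is legitimate, i.e. that $G^{ij}$ is well-defined and the first-order expansion in the $\mu_i$ is the same as the first-order expansion in $\eta_{ij}$ at a diagonal point. Both are standard. I would also double-check that $e=(1,\dots,1)\in\Gamma_k$, which is immediate since $\sigma_j(1,\dots,1)=C_n^j>0$ for all $j$. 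No delicate estimate is needed; the whole proposition is a consequence of concavity plus Newton--Maclaurin, and I would present it in about half a page, deferring the matrix-concavity technicalities to \cite{CQ1}.
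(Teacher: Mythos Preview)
Your proposal is correct and matches the paper's treatment: the paper gives no explicit proof of this proposition but simply remarks that ``its proof is the same as Proposition 2.2.3 in \cite{CQ1},'' and you likewise defer the ellipticity and matrix-concavity to that reference. Your explicit concavity-plus-Euler argument for the bound $\sum_i G^{ii}\ge (C_n^k/C_n^l)^{1/(k-l)}$ is the standard one and is correct; it supplies detail that the paper leaves entirely to the citation.
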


\begin{proposition}\label{th-lem-07}
Let $\eta$ be a diagonal matrix with $\lambda(\eta)\in \Gamma_k$, $0\leq l \leq k-2$ and $k\geq 3$. Then
\begin{eqnarray}
-G^{1i, i1}(\eta)=\frac{G^{11}-G^{ii}}{\eta_{ii}-\eta_{11}}
\end{eqnarray}
for $i\geq 2$.
\end{proposition}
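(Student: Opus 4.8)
The plan is to compute the second derivatives of $G$ at a diagonal matrix directly from the formula for the first derivatives, using the standard fact (which holds for any symmetric function of eigenvalues) that for $i \neq j$ the mixed second derivative in the ``off-diagonal'' directions is governed by a divided difference of the first derivatives. Concretely, write $G(\eta) = g(\lambda(\eta))$ where $g$ is the symmetric function $(\sigma_k/\sigma_l)^{1/(k-l)}$ of the eigenvalues $\lambda = (\lambda_1,\dots,\lambda_n)$. First I would recall the classical formula for the Hessian of a function of a symmetric matrix in terms of its eigenvalue representation: at a diagonal matrix $\eta = \mathrm{diag}(\eta_{11},\dots,\eta_{nn})$ with distinct entries $\eta_{11}, \eta_{ii}$, one has
\begin{eqnarray*}
G^{ij,rs} = \sum_p \frac{\partial^2 g}{\partial \lambda_p^2}\,\delta_{ij}^p\delta_{rs}^p + \sum_{p\neq q}\frac{1}{\lambda_p-\lambda_q}\Big(\frac{\partial g}{\partial\lambda_p}-\frac{\partial g}{\partial\lambda_q}\Big)\cdot(\text{off-diagonal pairing}),
\end{eqnarray*}
and in particular, extracting the coefficient that multiplies the pairing of the $(1,i)$ and $(i,1)$ entries, one gets precisely
\[
G^{1i,i1}(\eta) = \frac{g_1 - g_i}{\lambda_1 - \lambda_i} = \frac{G^{11}-G^{ii}}{\eta_{11}-\eta_{ii}},
\]
where $g_p = \partial g/\partial\lambda_p = G^{pp}$. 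Negating gives the claimed identity. So the core of the argument is simply quoting (or re-deriving) this eigenvalue-perturbation identity, and the only thing to check is that it applies, i.e. that the formula is valid here.

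The one subtlety — and the step I expect to require the most care — is the apparent singularity when $\eta_{11} = \eta_{ii}$, i.e. when $\lambda_1 = \lambda_i$. In that case the divided difference $\frac{G^{11}-G^{ii}}{\eta_{ii}-\eta_{11}}$ must be interpreted as the limit, which equals $\partial^2 g/\partial\lambda_1^2 - \partial^2 g/\partial\lambda_1\partial\lambda_i$ evaluated at the coincident point; one should remark that $G$ extends smoothly across the diagonal wall (this is standard for symmetric functions of eigenvalues that are themselves smooth functions of the $\sigma_j$'s, as here since $\lambda(\eta)\in\Gamma_k$ keeps us in the region where $(\sigma_k/\sigma_l)^{1/(k-l)}$ is smooth), so both sides are continuous and the identity passes to the limit by continuity. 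I would therefore state the proof first under the assumption $\eta_{11}\neq\eta_{ii}$ and then dispatch the equal case by a density/continuity remark.

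To keep the proof self-contained rather than invoking a black box, I would instead carry out the following short direct computation: perturb $\eta$ by $t(E_{1i}+E_{i1})$ where $E_{ab}$ is the elementary matrix, diagonalize the resulting $2\times 2$ block, find that the eigenvalues in that block become $\frac{\eta_{11}+\eta_{ii}}{2} \pm \sqrt{\big(\frac{\eta_{11}-\eta_{ii}}{2}\big)^2 + t^2}$ while all other eigenvalues are unchanged to second order in $t$, substitute into $g$, expand to order $t^2$, and read off the coefficient of $t^2$, which is $\frac{1}{2}\cdot\frac{\partial^2}{\partial t^2}\big|_{t=0} g(\dots)$. A direct Taylor expansion of $g(\dots \pm \sqrt{a^2+t^2}\dots)$ in $t$ produces exactly $\frac{g_1-g_i}{\eta_{11}-\eta_{ii}}\,t^2 + O(t^4)$ (the first-order terms cancel by the $\pm$ symmetry), and matching with $G^{1i,i1}t^2 + G^{i1,1i}t^2 = 2G^{1i,i1}t^2$ — using the symmetry $G^{1i,i1}=G^{i1,1i}$ — gives $2G^{1i,i1} = 2\,\frac{g_1-g_i}{\eta_{11}-\eta_{ii}}$, hence the stated formula after using $g_p = G^{pp}$ and rearranging the sign. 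The hypotheses $k\geq 3$ and $0\leq l\leq k-2$ enter only to guarantee, via Lemma \ref{lemma1} and Proposition \ref{ellipticconcave}, that we are in the smooth regime of $G$ and that all the expressions above are well-defined; I would note this but not belabor it.
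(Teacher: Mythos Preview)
Your approach is correct but differs substantially from the paper's. The paper proceeds by a direct algebraic computation: it uses the explicit formulas
\[
\frac{\partial^2 \sigma_k}{\partial \eta_{1i}\partial \eta_{i1}}=-\sigma_{k-2}(\eta|1i)
\]
(and the analogue for $\sigma_l$), plugs these into the chain-rule expression for $G^{1i,i1}$, and then invokes the elementary identity $\sigma_{k-1}(\eta|1)=\sigma_{k-1}(\eta|1i)+\eta_{ii}\sigma_{k-2}(\eta|1i)$ (together with its counterpart with the roles of $1$ and $i$ swapped) to rewrite $\big(\sigma_{k-2}(\eta|1i)\sigma_l-\sigma_{l-2}(\eta|1i)\sigma_k\big)(\eta_{ii}-\eta_{11})$ as the difference $G^{ii}-G^{11}$ up to the common prefactor. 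In other words, the paper computes $G^{1i,i1}(\eta)(\eta_{ii}-\eta_{11})$ directly and shows it equals $G^{ii}-G^{11}$, avoiding any division.

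Your route---the divided-difference formula for the Hessian of a smooth symmetric function of eigenvalues, established via the rank-one perturbation $\eta+t(E_{1i}+E_{i1})$---is more conceptual: it shows the identity is not special to $(\sigma_k/\sigma_l)^{1/(k-l)}$ but holds for any smooth symmetric $g$. The paper's computation, by contrast, is entirely hands-on with the $\sigma_j$'s and needs no eigenvalue perturbation theory. Each has its merits: the paper's argument is self-contained and never divides by $\eta_{ii}-\eta_{11}$, so the equal-eigenvalue case requires no separate continuity remark; yours explains the structural reason for the formula and would transfer verbatim to other operators $G$. Two small points to tidy in your write-up: you silently use $G^{1i,1i}=G^{i1,i1}=0$ at diagonal $\eta$ when matching the $t^2$ coefficients (true, but worth one line), and the bookkeeping of the factor of $2$ between $\phi''(0)$ and the $t^2$-coefficient is slightly garbled in your sketch, though your final identity is correct.
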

\begin{proof}
We only need to proof the statement in case $l\geq 1$. According to the Proposition 2.1.4 in \cite{CQ1}, we know that
\begin{eqnarray*}
\frac{\partial \sigma_k(\eta)}{\partial \eta_{ij}}=\begin{cases}  \sigma_{k-1}(\eta|i), \quad ~\mbox{if}~i=j,\\
0,\quad \quad \quad~\mbox{if} ~i\neq j,
\end{cases}
\end{eqnarray*}
and
\begin{eqnarray*}
\frac{\partial^2 \sigma_k(\eta)}{\partial \eta_{ij} \partial \eta_{kl}}=\begin{cases}  \sigma_{k-2}(\eta|ik), \quad ~\mbox{if}~i=j, k=l, i \neq k,\\
-\sigma_{k-2} (\eta|ik),  ~\mbox{if}~ i=l, j=k, i\neq j,\\
0,\quad \quad \quad \quad\quad \quad ~\mbox{otherwise}.
\end{cases}
\end{eqnarray*}
Thus
\begin{eqnarray*}
G^{1i, i1}(\eta) (\eta_{ii}-\eta_{11})&=& \frac{1}{k-l} \left(\frac{\sigma_k(\eta)}{\sigma_l(\eta)}\right)^{\frac{1}{k-l}-1} \frac{1}{\sigma_l^2(\eta)} \left( \frac{\partial^2 \sigma_k(\eta)}{\partial \eta_{1i} \partial \eta_{i1}} \sigma_l(\eta) -\frac{\partial^2 \sigma_l(\eta)}{\partial \eta_{1i} \partial \eta_{i1}} \sigma_k(\eta) \right)(\eta_{ii}-\eta_{11})\\
&=&-\frac{1}{k-l} \left(\frac{\sigma_k(\eta)}{\sigma_l(\eta)}\right)^{\frac{1}{k-l}-1} \frac{1}{\sigma_l^2(\eta)} \left( \sigma_{k-2} (\eta|1i) \sigma_l(\eta)- \sigma_{l-2}(\eta|1i) \sigma_k(\eta) \right) (\eta_{ii}-\eta_{11}).
\end{eqnarray*}
Note that $\sigma_{k-1} (\eta|1)=\sigma_{k-1}(\eta|1i)+\eta_{ii} \sigma_{k-2}(\eta|1i)$, Then
\begin{eqnarray*}
G^{1i, i1}(\eta) (\eta_{ii}-\eta_{11})&=& -\frac{1}{k-l} \left(\frac{\sigma_k(\eta)}{\sigma_l(\eta)}\right)^{\frac{1}{k-l}-1} \frac{1}{\sigma_l^2(\eta)} \left(\sigma_{k-1} (\eta|1) \sigma_l(\eta)- \sigma_{l-1}(\eta|1) \sigma_k(\eta)  \right)\\
&&-\frac{1}{k-l} \left(\frac{\sigma_k(\eta)}{\sigma_l(\eta)}\right)^{\frac{1}{k-l}-1} \frac{1}{\sigma_l^2(\eta)} \left(\sigma_{k-1} (\eta|i) \sigma_l(\eta)- \sigma_{l-1}(\eta|i) \sigma_k(\eta)  \right)\\
&=&G^{ii}-G^{11}.
\end{eqnarray*}
\end{proof}

\section{The  prior estimates}

In order to prove Theorem \ref{Main}, we consider the family of
equations as in \cite{An, Li-Sh} for $0\leq t\leq 1$
\begin{eqnarray}\label{Eq2}
\frac{\sigma_k(\lambda(\eta))}{\sigma_l(\lambda(\eta))}=f^t(X, \nu),
\end{eqnarray}
where
\begin{eqnarray*}
f^t(X, \nu)=tf(X, \nu)+ (1-t)  \frac{C_n^k}{C_n^l}  (n-1)^{k-l} \left( \frac{1}{|X|^{k-l}} +\epsilon (\frac{1}{|X|^{k-l}}-1)\right),
\end{eqnarray*}
where the constant $\epsilon$ is small sufficiently such that
\begin{eqnarray*}
\min_{r_1\leq \rho\leq r_2}\left( \frac{1}{\rho^{k-l}} +\epsilon (\frac{1}{\rho^{k-l}}-1)\right)\geq c_0>0
\end{eqnarray*}
for some positive constant $c_0$.

\subsection{$C^0$ Estimates}

Now, we can prove the following proposition which asserts that  the
solution of the equation \eqref{ht-Eq} have uniform $C^0$ bound.

\begin{theorem}\label{ht-C0}
Assume  $f\in C^2((\overline{B}_{r_2}\backslash B_{r_1}) \times \mathbb{S}^n)$ is a positive function.
Under the assumptions \eqref{ASS1} and \eqref{ASS2} mentioned in
Theorem \ref{Main} , if $M\subset \mathbb{R}^{n+1}$ is a
star-shaped,  $(\eta, k)$-convex hypersurface satisfied the equation
\eqref{Eq2} for a given $t \in [0, 1]$, then
\begin{eqnarray*}
r_1<\rho(X)<r_2, \quad \forall \ X \in M.
\end{eqnarray*}
\end{theorem}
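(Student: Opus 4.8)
\emph{The plan is to use the maximum principle applied to the radial function} $\rho(X)$ \emph{restricted to} $M$. Since $M$ is star-shaped and closed, $\rho$ attains a positive maximum at some $X_0 \in M$ and a positive minimum at some $X_1 \in M$. At an interior extremum of $\rho$ on $M$ we have $D\rho = 0$, so by the formula \eqref{Nor} the outer normal at such a point is purely radial, $\nu = e_\rho = X/|X|$, and $v = 1$. First I would exploit this: at $X_0$ (a maximum of $\rho$) the Hessian $D^2\rho \le 0$, hence by \eqref{h_ij} with $D\rho=0$, $v=1$ we get $h^i_j = \tfrac{1}{\rho}(\delta^i_j - \sigma^{ik} D_j D_k(\log\rho)) \ge \tfrac{1}{\rho}\delta^i_j$, because $D_jD_k(\log\rho) = \rho^{-1}D_jD_k\rho \le 0$ when $D\rho=0$. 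Thus every principal curvature satisfies $\kappa_i \ge 1/\rho(X_0)$, so $\lambda_i(\eta) = \sum_{j\neq i}\kappa_j \ge (n-1)/\rho(X_0)$.

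Next I would feed this into the equation. Since the operator $\sigma_k/\sigma_l$ is monotone on $\Gamma_k$ and $\lambda(\eta) \in \Gamma_k$, and since $\lambda(\eta) \ge \tfrac{n-1}{\rho(X_0)}(1,\dots,1)$ coordinatewise on the cone, we obtain
\[
f^t(X_0,\nu) = \frac{\sigma_k(\lambda(\eta))}{\sigma_l(\lambda(\eta))} \ge \frac{\sigma_k\!\left(\tfrac{n-1}{\rho(X_0)}\mathbf 1\right)}{\sigma_l\!\left(\tfrac{n-1}{\rho(X_0)}\mathbf 1\right)} = \frac{C_n^k}{C_n^l}\left(\frac{n-1}{\rho(X_0)}\right)^{k-l}.
\]
Now I would compare with assumption \eqref{ASS1}: if $\rho(X_0) > r_2$ were false in the sense $\rho(X_0)\ge r_2$... rather, suppose for contradiction $\rho(X_0) = r_2$ (the value $r_2$ being the \emph{a priori} allowed ceiling); then $f^t(X_0, X_0/|X_0|) = t f(X_0,\nu) + (1-t)\tfrac{C_n^k}{C_n^l}(n-1)^{k-l}\big(r_2^{-(k-l)} + \epsilon(r_2^{-(k-l)}-1)\big)$. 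Using \eqref{ASS1} for the $f$-term and the fact that $r_2^{-(k-l)} - 1 \le 0$ when $r_2 \ge 1$ (or more carefully tracking the barrier term) one finds $f^t(X_0,\nu) \le \tfrac{C_n^k}{C_n^l}(n-1/r_2)^{k-l}$, contradicting the displayed lower bound unless the Hessian term forces strict inequality. The symmetric argument at the minimum point $X_1$ uses $D^2\rho \ge 0$, hence $\kappa_i \le 1/\rho(X_1)$, $\lambda_i(\eta) \le (n-1)/\rho(X_1)$, giving $f^t(X_1,\nu) \le \tfrac{C_n^k}{C_n^l}\big(\tfrac{n-1}{\rho(X_1)}\big)^{k-l}$, which combined with \eqref{ASS2} yields $\rho(X_1) > r_1$.

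\emph{The main obstacle} is handling the boundary cases carefully: the statement claims the \emph{strict} inequalities $r_1 < \rho < r_2$, so I cannot merely assume $M \subset \{r_1 \le |X| \le r_2\}$ a priori — rather, the estimate must be self-improving. The clean way is to argue that the set of $t$ for which a solution lies strictly inside the annulus is open and closed, or to note that at a boundary-touching extremum the strict monotonicity/concavity of the operator together with the \emph{strict} part coming from the Hessian term (which is $\le 0$ but, combined with the $\delta^i_j$ term, gives a genuine gap in the symmetric comparison via Newton--Maclaurin, Lemma \ref{lemma1}(2)) forces a contradiction; alternatively one shows $\rho \le r_2$ and $\rho \ge r_1$ first and then upgrades to strict inequalities using that equality in \eqref{ASS1}/\eqref{ASS2} would make the extremum point a point where $M$ is internally tangent to the sphere $|X| = r_2$ (resp. $r_1$), again contradicted by the curvature comparison unless $M$ is exactly that sphere — which is excluded since a round sphere of radius $r_2$ does not satisfy the equation under the strict hypotheses. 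I would also need assumption \eqref{ASS3} only for the interior gradient/second-order estimates later, not here, so it can be ignored in this proof.
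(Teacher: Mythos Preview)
Your approach is essentially identical to the paper's: at a maximum (resp.\ minimum) of $\rho$ you use $D\rho=0$, $\nu=X/|X|$, and \eqref{h_ij} to get $h^i_j\ge\rho^{-1}\delta^i_j$, hence $\lambda_i(\eta)\ge(n-1)/\rho$, then compare $\sigma_k/\sigma_l$ against $f^t$ via \eqref{ASS1}--\eqref{ASS2} to derive a contradiction. The paper does exactly this (phrasing the monotonicity step as ``concavity''), and you have correctly flagged the strictness issue and the irrelevance of \eqref{ASS3} here, both of which the paper treats more casually.
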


\begin{proof}
Assume $\rho(x)$ attains its maximum at $x_0 \in \mathbb{S}^n$ and
$\rho(x_0)\geq r_2$, then recalling \eqref{h_ij}
\begin{eqnarray*}
h^{i}_{j}=\frac{1}{\rho v}\left(\delta^{i}_{j}+[-\sigma^{im}+\frac{D^i\rho
D^m\rho}{\rho^2v}]D_jD_m(\log \rho)\right),
\end{eqnarray*}
which implies
\begin{eqnarray*}
h^{i}_{j}(x_0)=\frac{1}{\rho}[\delta^{i}_{j}-\sigma^{im}D_jD_m(\log
\rho)]\geq\frac{1}{\rho}\delta^{i}_{j}.
\end{eqnarray*}
Then
\begin{eqnarray*}
\eta^{i}_{j}(x_0)=H\delta_j^i-h^i_j\geq \frac{n-1}{\rho}\delta_j^i.
\end{eqnarray*}
Note that $\frac{\sigma_{k}}{\sigma_{l}}$ for $0\leq l\leq k-2$ is concave in
$\Gamma_{k}$. Thus
\begin{eqnarray*}
\frac{\sigma_k(\lambda(\eta))}{\sigma_{l}(\lambda(\eta))} \geq
\frac{\sigma_k(\frac{n-1}{\rho}\delta^{i}_{j})}{\sigma_{l}(\frac{n-1}{\rho}\delta^{i}_{j})}=\frac{C_n^k}{C_n^l}  (\frac{n-1}{\rho})^{k-l}.
\end{eqnarray*}
On the other hand at $x_0$, the unit outer normal $\nu$ is parallel to $M$, i.e., $\nu= \frac{X}{|X|}$. If $\rho(x_0)=r_2$, then
\begin{eqnarray*}
\frac{C_n^k}{C_n^l}  (\frac{n-1}{r_2})^{k-l}>f^t(X, \nu)=\frac{\sigma_k(\lambda(\eta))}{\sigma_{l}(\lambda(\eta))} \geq \frac{C_n^k}{C_n^l}  (\frac{n-1}{r_2})^{k-l}
\end{eqnarray*}
which is a contradiction. This shows $\sup \rho \leq r_2$. Similarly, we get
$\inf_M \rho \geq r_1$
in view of \eqref{ASS2}.
\end{proof}

Now, we prove the following uniqueness result.

\begin{proposition}\label{Uni}
For $t=0$, there exists an unique admissible solution of the
equation \eqref{Eq2}, namely $M=\mathbb{S}^n$.
\end{proposition}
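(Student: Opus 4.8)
\textbf{Proof proposal for Proposition \ref{Uni}.}

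The plan is to show that when $t=0$, any admissible (i.e.\ star-shaped, $(\eta,k)$-convex) solution of \eqref{Eq2} must be a coordinate sphere, and that the unit sphere $\mathbb{S}^n$ itself solves the equation. For the latter, on $M=\mathbb{S}^n$ we have $\rho\equiv 1$, all principal curvatures equal $1$, hence $\lambda_i(\eta)=H-\kappa_i=n-1$ for every $i$, so $\sigma_k(\lambda(\eta))/\sigma_l(\lambda(\eta))=(C_n^k/C_n^l)(n-1)^{k-l}$; on the other hand $f^0(X,\nu)=\frac{C_n^k}{C_n^l}(n-1)^{k-l}\bigl(\tfrac{1}{|X|^{k-l}}+\epsilon(\tfrac{1}{|X|^{k-l}}-1)\bigr)$ equals exactly this value when $|X|=1$. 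So $\mathbb{S}^n$ is a solution, provided $r_1<1<r_2$ (which is forced by \eqref{ASS1}--\eqref{ASS2}).

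For uniqueness I would run the same touching-at-an-extremum argument as in Theorem \ref{ht-C0}, but sharpened. Let $M$ be an admissible solution and suppose $\rho$ attains its maximum at $x_0\in\mathbb{S}^n$ with $m:=\rho(x_0)$. As in the proof of Theorem \ref{ht-C0}, at $x_0$ one has $D\rho=0$, $D^2\log\rho\le 0$, hence $h^i_j(x_0)\ge \tfrac1m\delta^i_j$, so $\eta^i_j(x_0)\ge\tfrac{n-1}{m}\delta^i_j$, and by concavity of $\sigma_k/\sigma_l$ on $\Gamma_k$,
\begin{eqnarray*}
f^0(X_0,\nu)=\frac{\sigma_k(\lambda(\eta))}{\sigma_l(\lambda(\eta))}(x_0)\ \ge\ \frac{C_n^k}{C_n^l}\Bigl(\frac{n-1}{m}\Bigr)^{k-l}.
\end{eqnarray*}
At $x_0$ the normal is radial, so $f^0(X_0,\nu)=\frac{C_n^k}{C_n^l}(n-1)^{k-l}\bigl(m^{-(k-l)}+\epsilon(m^{-(k-l)}-1)\bigr)$, and comparing the two expressions gives $\epsilon(m^{-(k-l)}-1)\ge 0$, i.e.\ $m\le 1$. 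An identical computation at a minimum point $x_1$ of $\rho$, with value $\mu:=\rho(x_1)$, reverses all inequalities ($D^2\log\rho\ge 0$ there) and yields $\mu\ge 1$. Since $\mu\le m$, we conclude $\mu=m=1$, so $\rho\equiv 1$ and $M=\mathbb{S}^n$.

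The main obstacle is making the extremum comparison genuinely yield equality rather than just a one-sided bound: this hinges on the explicit strict monotonicity of $\rho\mapsto \rho^{-(k-l)}+\epsilon(\rho^{-(k-l)}-1)$ built into the definition of $f^0$, which is precisely why the $\epsilon$-term was inserted. One should also check that the barrier value is attained with the correct (equality) case of the concavity inequality, but since at an interior extremum the matrix $\eta^i_j(x_0)-\tfrac{n-1}{m}\delta^i_j$ is positive semidefinite and $\sigma_k/\sigma_l$ is monotone on $\Gamma_k$, the inequality direction is automatic and no equality discussion of the concavity is actually needed — the strictness comes entirely from the $\epsilon$-perturbed right-hand side. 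A minor point to dispatch is that $\rho\equiv 1$ together with admissibility forces $M$ to be exactly the round unit sphere (not merely a sphere of radius $1$ centered elsewhere), which is immediate from the representation $X(x)=\rho(x)x$.
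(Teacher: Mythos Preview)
Your proposal is correct and follows essentially the same approach as the paper: compare the equation at the maximum and minimum of $\rho$ with the explicit radial value of $f^0$, using the monotonicity of $\sigma_k/\sigma_l$ on $\Gamma_k$ (applied to $\eta^i_j\ge \tfrac{n-1}{\rho}\delta^i_j$ at a max, reversed at a min), to force $\rho_{\max}\le 1\le\rho_{\min}$. You are more careful than the paper in verifying that $\mathbb{S}^n$ actually solves the $t=0$ equation and in isolating the role of the $\epsilon$-term, but the argument is the same.
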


\begin{proof}
Let $X$ be a solution of \eqref{Eq2},  for $t=0$
\begin{eqnarray*}
\frac{\sigma_k(\lambda(\eta))}{\sigma_l(\lambda(\eta))}=\frac{C_n^k}{C_n^l}  (n-1)^{k-l} \left( \frac{1}{|X|^{k-l}} +\epsilon (\frac{1}{|X|^{k-l}}-1)\right).
\end{eqnarray*}

Assume $\rho(x)$ attains its maximum $\rho_{max}$ at $x_0 \in
\mathbb{S}^n$, then
\begin{eqnarray*}
\frac{\sigma_k(\lambda(\eta))}{\sigma_{l}(\lambda(\eta))} \geq
\frac{C_n^k}{C_n^l}  (\frac{n-1}{\rho})^{k-l}
\end{eqnarray*}
which implies
\begin{eqnarray*}
\rho_{max}\leq 1.
\end{eqnarray*}
Similarly,
\begin{eqnarray*}
\rho_{min}\geq 1.
\end{eqnarray*}
Thus, $\rho=1$ is the unique solution of \eqref{Eq2} for $t=0$.
\end{proof}

\subsection{$C^1$ Estimates}

In this section, we establish the gradient estimate for the
equation. The treatment of this section follows largely from Lemma 4.1 of \cite{CJ}.

Recalling that a star-shaped hypersurface $M$ in $\mathbb{R}^{n+1}$ can be represented by
\begin{eqnarray*}
X(x)=\rho(x)x \quad \mbox{for} \quad x \in \mathbb{S}^n,
\end{eqnarray*}
where $X$ is the position vector of the hypersurface $M$ in $\mathbb{R}^{n+1}$.

In order to get the gradient estimate, we define a funcion $u=\langle X, \nu\rangle$. It is clear that
$$u=\frac{\rho^2}{\sqrt{\rho^{2}+|D \rho|^2}}.$$

\begin{theorem}\label{ht-C1e}
Under the assumption \eqref{ASS3}, if the closed star-shaped  $(\eta, k)$-convex hypersurface $M$ satisfying the curvature equation \eqref{ht-Eq}
 and the $\rho$ has positive upper and lower
bound. Then there exists a constant C depending only on $n, k, l,  \inf \rho,  \sup \rho, \inf f$ and $\|f\|_{C^1}$ such that
\begin{equation*}
|D \rho|\leq C.
\end{equation*}
\end{theorem}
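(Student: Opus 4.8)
The plan is to establish the gradient bound by considering the auxiliary function $\varphi = \log |D\rho|^2 + \psi(\rho)$ (or equivalently working with $u = \langle X, \nu\rangle$ and bounding it away from zero via the support function), evaluated at an interior maximum point $x_0 \in \mathbb{S}^n$. Since $\mathbb{S}^n$ is compact, such a maximum exists, and at $x_0$ we have $D\varphi = 0$ and $D^2\varphi \le 0$. The idea is to rewrite the equation $\sigma_k(\lambda(\eta))/\sigma_l(\lambda(\eta)) = f(X,\nu)$ in terms of $\rho$ and its covariant derivatives on $\mathbb{S}^n$ using the formulas from Section 2.2, in particular \eqref{h_ij} for $h^i_j$ and the consequent expression for $\eta^i_j = H\delta^i_j - h^i_j$; this exhibits the PDE as a second-order elliptic equation for $\rho$ with the linearized operator $G^{ij}$ playing the role of the (positive definite, by Proposition \ref{ellipticconcave}) coefficient matrix.

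First I would introduce the spherical radial variable and set $\gamma = \log \rho$, so that $D_i D_j \gamma$ appears naturally in \eqref{h_ij}. Differentiating the equation once and using $D\varphi(x_0)=0$ gives a relation expressing the first derivatives of $\rho$; the crucial structural input is assumption \eqref{ASS3}, namely $\partial_\rho[\rho^{k-l} f(X,\nu)] \le 0$, which will control the "bad" term coming from the dependence of $f$ on $X$ and $\nu$ along the radial direction. Then I would feed this into the second-derivative inequality $G^{ij} D_i D_j \varphi \le 0$ at $x_0$: the concavity of $G$ and the ellipticity bound $\sum G^{ii} \ge (C_n^k/C_n^l)^{1/(k-l)}$ from Proposition \ref{ellipticconcave} provide a positive lower bound on the coefficients, while the homogeneity of $G$ (degree one in $\eta$) lets me absorb and estimate the zeroth-order and gradient terms. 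The sign condition \eqref{ASS3} is exactly what makes the radial-derivative contribution of $f$ have the favorable sign, so that at a maximum of $\varphi$ one obtains $|D\rho|^2(x_0) \le C$ with $C$ depending only on $n,k,l$, the bounds on $\rho$, $\inf f$, and $\|f\|_{C^1}$; since $\psi$ is chosen bounded in terms of $\sup\rho, \inf\rho$, this yields the bound everywhere.

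The main obstacle I anticipate is the bookkeeping in converting the equation and the test function to spherical coordinates and correctly identifying which terms in $G^{ij}D_iD_j\varphi$ are controlled by the concavity/ellipticity and which require \eqref{ASS3}; in particular one must handle the term $G^{ij}\,\partial_\nu f \cdot D_i D_j \nu$ arising from the $\nu$-dependence of $f$, using the Weingarten formula \eqref{Wein for} to rewrite $D\nu$ in terms of $h_{ij}$, and then combine it with the radial term so that \eqref{ASS3} can be applied as a single inequality on $\rho^{k-l} f$. A secondary technical point is choosing the function $\psi(\rho)$ (e.g. $\psi(\rho) = -A\log\rho$ or $\psi(\rho)=A\rho$ for a suitable large constant $A$) so that the good negative term it contributes dominates the remaining error terms; this is a standard maximum-principle device and, given Lemma \ref{lemma1} and Proposition \ref{ellipticconcave}, should go through as it does in Lemma 4.1 of \cite{CJ}, which the authors explicitly say they are following.
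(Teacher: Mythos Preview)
Your high-level strategy is sound --- maximum principle with a barrier, ellipticity from Proposition~\ref{ellipticconcave}, and the sign condition \eqref{ASS3} to control the radial derivative of $f$ --- but the paper executes it along a genuinely different route. Rather than working on $\mathbb{S}^n$ with $\varphi=\log|D\rho|^2+\psi(\rho)$ and converting the PDE via the parametric formulas \eqref{h_ij}, the paper works \emph{intrinsically on $M$} with the support function $u=\langle X,\nu\rangle$ and the test function
\[
\phi=-\log u+\gamma(|X|^2),\qquad \gamma(t)=\beta/t,
\]
applying the operator $F^{ii}=\sum_{k\neq i}G^{kk}$ to $\phi_{ii}\le 0$ at the maximum. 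The key simplifying trick you do not mention is the choice of frame: at the maximum point one takes $\{e_1,\dots,e_n\}$ so that the tangential part of $X$ lies entirely in the $e_1$ direction, i.e.\ $\langle X,e_i\rangle=0$ for $i\ge2$. Then $\phi_i=0$ forces $h_{1i}=0$ for $i\ge2$ and $h_{11}=2u\gamma'$, after which one may diagonalize $(h_{ij})$ and all the Gauss/Weingarten/Codazzi computations collapse to one-index sums. Assumption~\eqref{ASS3} enters exactly as you anticipate, rewritten as $\tilde f+\langle X,e_1\rangle(d_X\tilde f)(e_1)+u(d_\nu\tilde f)(\nu)\le 0$, which kills the $1/u$-singular term; the remaining inequality is a quadratic in $|\langle X,e_1\rangle|$ with leading coefficient $\sim\beta^2$, so choosing $\beta$ large bounds $|\langle X,e_1\rangle|$ and hence $u$ from below.

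Your spherical-coordinate approach can be made to work (and is common in the literature), but the bookkeeping you flag as the ``main obstacle'' is real: the operator $G^{ij}$ is naturally defined with respect to $\eta_{ij}$ on $M$, not with respect to $D_iD_j\rho$ on $\mathbb{S}^n$, and transferring the second-order inequality through the change of variables $g^{ij},v,h^i_j$ introduces several cross terms (in particular the $D^i\rho D^j\rho/(\rho^2 v^2)$ correction in $g^{ij}$) that the intrinsic argument sidesteps entirely. The paper's choice of $\gamma(|X|^2)=\beta/|X|^2$ rather than your $\psi(\rho)=A\rho$ or $-A\log\rho$ is also not cosmetic: it makes $(\gamma')^2+\gamma''>0$ with a coefficient growing like $\beta^2$, which is what produces the dominant positive quadratic term.
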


\begin{proof}
It is sufficient to obtain a positive lower bound of $\langle X,
\nu\rangle$. We consider
\begin{eqnarray*}
\phi=-\log u+\gamma(|X|^2),
\end{eqnarray*}
where $\gamma(t)$ is a function which will be chosen later.
Assume $X_0$ is the maximum value point of $\phi$. If $X$ is
parallel to the normal direction $\nu$ of at $X_0$, we have
$\langle X, \nu\rangle=|X|^2$. Thus, our result holds.
So, we assume $X$ is not
parallel to the normal direction $\nu$ at $X_0$, we may choose the
local orthonormal frame $\{e_1, \cdots, e_n\}$ on $M$ satisfying
\begin{eqnarray*}
\langle X, e_1\rangle\neq 0, \quad \mbox{and} \quad \langle X,
e_i\rangle=0, \quad  \forall ~ i\geq 2.
\end{eqnarray*}
Using Weingarten equation, we obtain
$$u_i=\sum_j h_{ij} \langle X, e_j\rangle=h_{i1} \langle X, e_1\rangle.$$
Then, we arrive at $X_0$,
\begin{eqnarray}\label{Par-1}
0=\phi_i= - \frac{u_i}{u}+ 2\gamma^{\prime}\langle
X, e_i\rangle=- \frac{h_{i1} \langle
X, e_1\rangle}{u}+2\gamma^{\prime}\langle
X, e_i\rangle,
\end{eqnarray}
which implies that
\begin{eqnarray}\label{Par-1-1}
h_{11}=2u\gamma^{\prime}, \quad h_{1i}=0, \quad \forall ~
i\geq 2.
\end{eqnarray}
Therefore, we can rotate the coordinate system such that
$\{e_i\}_{i=1}^{n}$ are the principal curvature directions of the
second fundamental form $(h_{ij})$, i.e., $h_{ij}=h_{ii}\delta_{ij}$.
Thus,
\begin{eqnarray}\label{ht-c1-02}
0&\geq& F^{ii}\phi_{ii}\\
\nonumber&=&F^{ii} \left(-\frac{u_{ii}}{u}+\frac{u_{i}^2}{u^2}+\gamma^{\prime
\prime}(|X|^{2})_i^2+\gamma^{\prime}(|X|^2)_{ii} \right)
\\\nonumber&=&-\frac{F^{ii}u_{ii}}{u}+4((\gamma^{\prime})^2+ \gamma^{\prime\prime}) F^{11} \langle X, e_1\rangle^2+ \gamma^{\prime}F^{ii}(|X|^2)_{ii}.
\end{eqnarray}

Since $\eta_{ii}= \sum_{j\neq i} h_{jj}$, we have
$$\sum_i \eta_{ii} = (n-1) \sum_{i} h_{ii}, \quad h_{ii}= \frac{1}{n-1} \sum_k \eta_{kk}- \eta_{ii}.$$
It follows that
\begin{equation}\label{ht-c2-03}
\begin{aligned}
\sum_i F^{ii} h_{ii} =&\sum_i \left( \sum_k G^{kk} -G^{ii}\right) \left(\frac{1}{n-1} \sum_l \eta_{ll} -\eta_{ii}\right)\\
=& \sum_i G^{ii} \eta_{ii}\\
=&  \frac{1}{k-l} \left(\frac{\sigma_k(\eta)}{\sigma_l(\eta)}\right)^{\frac{1}{k-l}-1} \frac{\sum_i \eta_{ii}\sigma_{k-1}(\eta| i)\sigma_l(\eta)-\sigma_k(\eta) \sum_i \eta_{ii}\sigma_{l-1}(\eta| i)}{\sigma_l^2(\eta)}\\
=& \tilde{f},
\end{aligned}
\end{equation}
where $\tilde{f}=f^{\frac{1}{k-l}}$. Combining with \eqref{Gauss for}, we have
\begin{eqnarray}\label{ht-c1-04}
\gamma^{\prime}F^{ii}(|X|^2)_{ii}=2\gamma^{\prime}\sum_i F^{ii}-2\gamma^{\prime}u \tilde{f}.
\end{eqnarray}
Note that the curvature equation \eqref{ht-Eq}  can be written as
\begin{equation}\label{ht-eq-2}
G(\eta)=\tilde{f},
\end{equation}
 Differentiating \eqref{ht-eq-2}, we obtian
\begin{eqnarray*}
G^{ii} \eta_{iik}= (d_X \tilde{f})(e_k)+h_{kk} (d_{\nu} \tilde{f})(e_k).
\end{eqnarray*}
In fact
\begin{equation}\begin{aligned}\label{ht-c2-110}
F^{ii}h_{iik}&=\sum_i\left (\sum_jG^{jj}-G^{ii}\right )h_{iik}\\
&=\left(\sum_jG^{jj} \right)H_{k}-\sum_iG^{ii}h_{iik}\\
&=\sum_iG^{ii}\eta_{iik}\\
&=(d_X \tilde{f})(e_k)+h_{kk} (d_{\nu} \tilde{f})(e_k).
\end{aligned}
\end{equation}
Using Gauss formula \eqref{Gauss for},  Weingarten formula \eqref{Wein for} and Codazzi formula \eqref{Codazzi}
$$u_i=h_{ii}\langle X,e_i\rangle, \quad u_{ii}=\sum_k h_{iik}\langle X, e_k\rangle-uh_{ii}^2+h_{ii}.$$
Then
\begin{eqnarray}\label{ht-c1-03}
-\frac{F^{ii}u_{ii}}{u}&=&-\frac{\langle X, e_1\rangle}{u} \left((d_X \tilde{f})(e_1)+h_{11} (d_{\nu} \tilde{f})(e_1)\right) + F^{ii}h_{ii}^2 - \frac{\tilde{f}}{u}.
\end{eqnarray}


 Substituting  \eqref{ht-c1-03} and \eqref{ht-c1-04} into \eqref{ht-c1-02},
\begin{eqnarray}\label{ht-c1-05}
0&\geq&-\frac{\langle X, e_1\rangle}{u} \left((d_X \tilde{f})(e_1)+h_{11} (d_{\nu} \tilde{f})(e_1)\right) + F^{ii}h_{ii}^2 - \frac{\tilde{f}}{u}\\
\nonumber&&+4((\gamma^{\prime})^2+ \gamma^{\prime\prime}) F^{11} \langle X, e_1\rangle^2
+2\gamma^{\prime}\sum_i F^{ii}-2\gamma^{\prime}u \tilde{f}\\
\nonumber&=&- \frac{1}{u}\left(\langle X, e_1\rangle (d_X \tilde{f})(e_1) +  \tilde{f}\right)-2\gamma^{\prime} \langle X, e_1\rangle (d_{\nu} \tilde{f})(e_1)\\
 \nonumber&&+F^{ii}h_{ii}^2+4((\gamma^{\prime})^2+ \gamma^{\prime\prime}) F^{11} \langle X, e_1\rangle^2
+2\gamma^{\prime}\sum_i F^{ii}-2\gamma^{\prime}u \tilde{f}.
\end{eqnarray}
Since $X= \langle X, e_1\rangle e_1+ \langle X, \nu\rangle \nu$,
$$(d_X \tilde{f})(X)=\langle X, e_1 \rangle  (d_X \tilde{f})(e_1) + \langle X, \nu \rangle (d_{\nu} \tilde{f})(\nu).$$
From \eqref{ASS3} and $X=\rho(x) x$, we see that
\begin{eqnarray*}
0&\geq& \frac{\partial}{\partial \rho} \left( \rho^{k-l} f \right)=\frac{\partial}{\partial \rho} \left( \rho^{k-l} \tilde{f}^{k-l} \right)\\
&=&(k-l) (\rho \tilde{f})^{k-l} (\tilde{f}+ (d_X \tilde{f})(X))\\
&=&(k-l) (\rho \tilde{f})^{k-l} \left(\tilde{f}+
\langle X, e_1 \rangle  (d_X \tilde{f})(e_1) + \langle X, \nu \rangle (d_{\nu} \tilde{f})(\nu)
\right).
\end{eqnarray*}
It follows that
$$-\left(\tilde{f}+
\langle X, e_1 \rangle  (d_X \tilde{f})(e_1) \right)\geq  u (d_{\nu} \tilde{f})(\nu),$$
which implies
\begin{eqnarray}\label{ht-c1-06}
0&\geq&(d_{\nu} \tilde{f})(\nu)-2\gamma^{\prime} \langle X, e_1\rangle (d_{\nu} \tilde{f})(e_1)+F^{ii}h_{ii}^2\\
 \nonumber&&+4((\gamma^{\prime})^2+ \gamma^{\prime\prime}) F^{11} \langle X, e_1\rangle^2
+2\gamma^{\prime}\sum_i F^{ii}-2\gamma^{\prime}u \tilde{f}.
\end{eqnarray}
Choosing
$$\gamma(t) =\frac{\beta}{t},$$
where $\beta$ is a constant to be determined later. Recalling that $h_{11}=2\gamma^{\prime}u<0$ at $X_0$. From $H>0$ we know that
$$F^{11}=\sum_{j \neq 1} G^{jj} \geq \frac{1}{2} \sum_i G^{ii} =\frac{1}{2(n-1)}\sum_iF^{ii}\geq \frac{1}{2}\left(\frac{C_n^k}{C_n^l}\right)^{\frac{1}{k-l}}.$$
Substituting these into \eqref{ht-c1-06},
\begin{eqnarray*}
0&\geq&-C\sum_i F^{ii}-2C|\gamma^{\prime}| |\langle X, e_1\rangle| \sum_i F^{ii}+\frac{4}{n-1}((\gamma^{\prime})^2+ \gamma^{\prime\prime})  \langle X, e_1\rangle^2 \sum_i F^{ii}
+2\gamma^{\prime}\sum_i F^{ii},
\end{eqnarray*}
which implies
\begin{eqnarray}\label{ht-c1-07}
0&\geq&\frac{4}{n-1}(\frac{\beta^2}{\rho^8}+ \frac{4\beta^2}{\rho^{12}})  \langle X, e_1\rangle^2  -2C\frac{\beta}{\rho^4} |\langle X, e_1\rangle|
-2\frac{\beta}{\rho^4}-C \\
\nonumber&\geq&\frac{4}{n-1}(\frac{1}{\rho_2^8}+ \frac{4}{\rho_2^{12}})\beta^2  \langle X, e_1\rangle^2  -2C\frac{\beta}{\rho_1^4} |\langle X, e_1\rangle|
-2\frac{\beta}{\rho_1^4}-C,
\end{eqnarray}
where $\rho_1=\inf_M \rho, \rho_2=\sup_M \rho$.  So we can choose $\beta$ sufficiently large such that
$$|\langle X, e_1\rangle| < \frac{1}{2} \inf_M \rho,$$
combining with the fact $\rho^2=u^2+|\langle X, e_1\rangle|^2$, we obtain
$$u(X_0) \geq C.$$
So our proof is completed.
\end{proof}

\subsection{$C^2$ Estimates}

  Under the  the assumption \eqref{ASS1}, \eqref{ASS2}  and  \eqref{ASS3},  from Theorem   \ref{ht-C0} and  \ref{ht-C1e} we know that
there exists a positive constant $C$ depending on $\inf_{M} \rho$ and $\|\rho\|_{C^1}$ such that
$$\frac{1}{C} \leq inf_{M} u \leq u\leq \sup_{M} u \leq C.$$

\begin{theorem}\label{ht-C2e}
Let $M$ be a closed star-shaped  $(\eta, k)$-convex hypersurface satisfying the curvature equation \eqref{ht-Eq} for some positive function $f\in C^2(\Gamma)$, where $\Gamma$ is an open neighborhood of the unit normal boundle of $M$ in $\mathbb{R}^{n+1} \times \mathbb{S}^n$. Then, there exists a constant C depending only on $n, k, \|\rho\|_{C^1}, \inf_{M} \rho, \inf f$ and $\|f\|_{C^2}$ such that for $1\leq i\leq n$
\begin{equation*}
|\kappa_{i}(X)|\le C, \quad \forall ~ X \in M.
\end{equation*}
\end{theorem}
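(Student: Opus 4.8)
The plan is to prove the curvature bound by the standard maximum-principle argument applied to a suitably chosen test function built from the largest principal curvature, adapted to the operator $G(\eta)$ rather than $\sigma_k(\kappa)$ directly. Concretely, I would consider the quantity
\[
W(X) = \log \kappa_{\max}(X) + \phi(u) + \psi(|X|^2),
\]
where $\kappa_{\max}$ is the largest principal curvature, $u = \langle X, \nu\rangle$ (which by the $C^0$ and $C^1$ estimates already established is pinched between two positive constants), and $\phi, \psi$ are one-variable functions to be chosen. Since $\kappa_{\max}$ may not be smooth where the top eigenvalue has multiplicity, I would at a maximum point $X_0$ of $W$ pass to the standard perturbation: work with $h_{11}$ in a frame that diagonalizes $h_{ij}$ at $X_0$ with $h_{11} = \kappa_{\max}(X_0)$, and treat $\tilde h_{11}$ as a smooth proxy, the usual device. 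I would then compute $0 \ge \sum_i F^{ii} W_{ii}$ at $X_0$.

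The key steps, in order: first, record the structure equations relating $\eta$ and $h$ — in particular $\eta_{ii} = \sum_{j\ne i} h_{jj}$ and the identities $F^{ii} h_{ij k} = \sum_i G^{ii}\eta_{iik} = (d_X\tilde f)(e_k) + h_{kk}(d_\nu\tilde f)(e_k)$ already derived in \eqref{ht-c2-110}, and the commutator formula \eqref{2rd} for $\nabla_i\nabla_j h_{kl}$. Second, differentiate the equation $G(\eta) = \tilde f$ twice and feed $\nabla_1\nabla_1 h_{11}$ through \eqref{2rd}; the concavity of $G$ in $\eta$ (Proposition~\ref{ellipticconcave}) produces a good negative term $-G^{ij,rs}\eta_{ij1}\eta_{rs1} \ge 0$ on the correct side, and the off-diagonal second-derivative identity of Proposition~\ref{th-lem-07}, namely $-G^{1i,i1} = \frac{G^{11} - G^{ii}}{\eta_{ii} - \eta_{11}}$, lets me control the "bad" third-order terms $\sum_{i\ge 2} (G^{11} - G^{ii}) \frac{h_{i1k}^2}{\dots}$ in the usual Guan–Ren–Wang style. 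Third, combine with the first-order relation coming from $W_i = 0$ at $X_0$ (which ties $h_{11 i}/h_{11}$ to $\phi'(u) u_i + \psi'(|X|^2)(|X|^2)_i$), use $\sum F^{ii} \ge (C_n^k/C_n^l)^{1/(k-l)} > 0$ from Proposition~\ref{ellipticconcave} together with the pinching of $u$, and choose $\phi$ (say $\phi(u) = -\log u$ or $\phi(u) = N u^{-1}$) and $\psi$ (e.g. $\psi(t) = \frac{A}{t}$ as in the $C^1$ proof) with $A, N$ large to absorb all lower-order terms and force $h_{11}(X_0) \le C$. Finally, since $\lambda(\eta)\in\Gamma_k$ gives $\eta_{ii} > 0$ hence $\sum_{j\ne i} h_{jj} > 0$ for each $i$, an upper bound on $\kappa_{\max}$ automatically yields a two-sided bound on every $\kappa_i$.

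The main obstacle I anticipate is exactly the third-order term handling: because the equation is a quotient $\sigma_k/\sigma_l$ written as $G(\eta)$ in the Newton-transformed variable $\eta$ rather than in $\kappa$, one must be careful that the concavity inequality and the sign of $-G^{1i,i1}$ go the right way, and that the terms $\sum_{i} F^{ii} h_{ii}^2 \kappa_{\max}$ (which arise from the Gauss-equation curvature terms in \eqref{2rd} after dividing by $h_{11}$) dominate the cross terms. This is precisely the place where, for the quotient of $\kappa$'s, the estimate is known to fail (as noted in the Remark after Theorem~\ref{Main}); the point of working with $\lambda(\eta)$ is that $\eta_{ii}$ is a \emph{sum} $\sum_{j\ne i} h_{jj}$, so the "concavity in $\eta$" is genuinely stronger than concavity in $\kappa$ and gives an extra good term. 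I would isolate the inequality
\[
0 \ge \frac{1}{h_{11}}\Big( \sum_{i\ge 2} (G^{11} - G^{ii}) \frac{(h_{11i})^2}{\eta_{ii} - \eta_{11}} \cdot(\text{const}) + \sum_i F^{ii}(h_{ii})^2 h_{11} \Big) + (\text{controlled terms}),
\]
verify the bracketed expression has a sign via Proposition~\ref{th-lem-07} and the Cauchy–Schwarz trick splitting indices into $\{i : h_{ii} > \delta h_{11}\}$ and its complement, and then close the argument by the choice of $A$ and $N$.
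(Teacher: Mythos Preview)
Your strategy is essentially the paper's: maximum principle on a test function built from $\log\kappa_{\max}$, the support function $u=\langle X,\nu\rangle$, and $|X|^2$; differentiate the equation twice, use concavity of $G$ and the identity of Proposition~\ref{th-lem-07} to handle the third-order terms, and split into cases according to whether the remaining $h_{ii}$ are comparable to $h_{11}$. The paper organizes this into four explicit steps (a master inequality; the case split $|h_{ii}|\le\delta h_{11}$ versus not; an intermediate bound $|h_{ii}|\le CA$ for $i\ge2$; then closing), but the skeleton is what you wrote.

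Two concrete points where your tentative choices would not work as stated. First, the paper takes $\phi(u)=-\log(u-a)$ with $a=\tfrac12\inf_M u$ and $\psi(|X|^2)=\tfrac{A}{2}|X|^2$, \emph{not} $\psi(t)=A/t$. The shift by $a$ is not cosmetic: in the combination of the $\log h_{11}$ and $\log(u-a)$ second derivatives the two $F^{ii}h_{ii}^2$ terms give $\frac{u}{u-a}F^{ii}h_{ii}^2-F^{ii}h_{ii}^2=\frac{a}{u-a}F^{ii}h_{ii}^2$, and this surviving positive term is exactly what drives Steps~2--4. With $\phi(u)=-\log u$ the two cancel completely and the argument collapses. (Your alternative $\phi(u)=N/u$ with $N$ large would recover a positive coefficient, so that option is viable.) Likewise $\psi(t)=A/t$ gives $\psi'<0$, so the contribution $\psi'F^{ii}(|X|^2)_{ii}$ produces $-cA\sum_iF^{ii}$ rather than $+A\sum_iF^{ii}$; the paper needs the positive sign to get $\tfrac{A}{2}\sum_iF^{ii}\ge C_0h_{11}$ in the ``pinched'' case. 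Second, your closing remark that $\lambda(\eta)\in\Gamma_k$ forces each $\eta_{ii}>0$ is false for $k<n$; what is true (and sufficient) is $\lambda(\eta)\in\Gamma_k\subset\Gamma_1$, hence $H=\tfrac{1}{n-1}\sum_i\eta_{ii}>0$, so an upper bound on $\kappa_{\max}$ gives $\kappa_i>-(n-1)\kappa_{\max}\ge -C$.
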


\begin{proof}
Since $\eta\in\Gamma_{k}\subset\Gamma_{1}$, we see that the mean curvature is positive. It suffices to prove that the largest curvature $\kappa_{\mbox{max}}$ is uniformly bounded from above. Taking the allxillary function
\begin{equation*}
Q=\log\kappa_{\mbox{max}}-\log(u-a)+\frac{A}{2}|X|^2,
\end{equation*}
where $a=\frac{1}{2}\inf_M(u)>0$ and $A>1$ is a constant to be determined later.
Assume that $X_0$ is the maximum point of $Q$. We choose a local orthonormal frame  $\{e_{1}, e_{2}, \cdots, e_{n}\}$ near $X_0$ such that
$$h_{ii}=\delta_{ij}h_{ij}, \quad  h_{11}\geq h_{22}\geq \cdots \geq h_{nn}$$
at $X_0$. Recalling that $\eta_{ii}=\sum_{k\neq i}h_{kk}$, we have
$$\eta_{11}\leq \eta_{22}\leq\cdots\leq\eta_{nn}.$$
It can follows that
$$
G^{11}\geq G^{22}\geq\cdots\geq G^{nn}, \quad F^{11}\leq F^{22}\leq\dots\leq F^{nn}.$$
We define a new function W by
$$ W=\log h_{11}-\log(u-a)+\frac{A}{2}|X|^2.$$
Since $h_{11}(X_0)=\kappa_{\mbox{max}}(X_0)$ and $h_{11}\leq\kappa_{\mbox{max}}$ near $X_0$, $W$ achieves a maximum at $X_0$. Hence
\begin{equation}\label{ht-c2-01}
0=W_i=\frac{h_{11i}}{h_{11}}-\frac{u_i}{u-a}+A \langle X, e_i\rangle
\end{equation}
and
\begin{equation}\label{ht-c2-02}
0\geq F^{ii}W_{ii}=F^{ii}(\log h_{11})_{ii}-F^{ii}(\log(u-a))_{ii}+\frac{A}{2}F^{ii}(|X|^2)_{ii}.
\end{equation}

We divide our proof in four steps.

\textbf{Step 1}:  We show that
\begin{equation}\label{ht-c2-1}
\begin{aligned}
0&\geq - \frac{2}{h_{11}} \sum_{i\geq 2} G^{1i, i1} h_{11i}^2 -\frac{F^{ii}h_{11i}^2}{h_{11}^2}\\
&+\frac{aF^{ii} h_{ii}^2}{u-a}+\frac{F^{ii}u_i^2}{(u-a)^2} +A\sum_i F^{ii}-C_0h_{11}-C_0\frac{1}{h_{11}}-AC_0,
\end{aligned}
\end{equation}
where $C_0$ depend on $\inf_{M} \rho$, $\|\rho\|_{C^1}$ and $\|f\|_{C^2}$ and satisfy $1+\sum_i \langle X, e_i\rangle^2 \leq C_0$.

We apply the similar argument in \eqref{ht-c1-04},
\begin{eqnarray}\label{ht-c2-04}
\frac{A}{2}F^{ii}(|X|^2)_{ii}=A\sum_i F^{ii} (1- h_{ii} \langle X, \nu\rangle)
=A\sum_i F^{ii}-Auf^{\frac{1}{k-l}}.
\end{eqnarray}
Using the similar argument in \eqref{ht-c2-110},  we obtain
\begin{eqnarray*}
F^{ii}h_{iik}=(d_X \tilde{f})(e_k)+h_{kk} (d_{\nu} \tilde{f})(e_k).
\end{eqnarray*}
By Gauss formula \eqref{Gauss for},  Weingarten formula \eqref{Wein for} and Codazzi formula \eqref{Codazzi}
$$u_i=h_{ii}\langle X,e_i\rangle, \quad u_{ii}=\sum_k h_{iik}\langle X, e_k\rangle-uh_{ii}^2+h_{ii}.$$
It follows that
\begin{equation}\label{ht-c2-05}
\begin{aligned}
-F^{ii}(\log(u-a))_{ii}&=-\frac{F^{ii}u_{ii}}{u-a}+\frac{F^{ii}u_i^2}{(u-a)^2}\\
&=-\frac{1}{u-a}\sum_k F^{ii}h_{iik}\langle X,e_k\rangle+\frac{uF^{ii}h_{ii}^2}{u-a}-\frac{F^{ii}h_{ii}}{u-a} +\frac{F^{ii}u_i^2}{(u-a)^2}\\
&=-\frac{1}{u-a}\sum_k F^{ii}h_{iik}\langle X, e_k\rangle + \frac{uF^{ii}h_{ii}^2}{u-a}-\frac{\tilde{f}}{u-a}+\frac{F^{ii}u_i^2}{(u-a)^2}\\
&\geq -\frac{1}{u-a}\sum_k h_{kk} (d_{\nu} \tilde{f}) (e_k)\langle X, e_k\rangle + \frac{uF^{ii}h_{ii}^2}{u-a}-\frac{\tilde{f}}{u-a}+\frac{F^{ii}u_i^2}{(u-a)^2} -C_1,
\end{aligned}
\end{equation}
where $C_1$ depend on $\inf_{M} \rho$, $\|\rho\|_{C^1}$ and $\|f\|_{C^1}$.

Differentiating \eqref{ht-eq-2} twice, we obtian
$$F^{ii}h_{ii11}=G^{ii}\eta_{ii11}\geq -G^{ij, rs} \eta_{ij1} \eta_{rs1}+ \sum_k h_{11k} (d_{\nu} \tilde{f})(e_k) -C_2h_{11}^2-C_2,$$
where $C_2$ depend on $\|f\|_{C^2}$. Applying the concavity of $G$ and Codazzi formula, we have
$$-G^{ij,rs} \eta_{ij1} \eta_{rs1} \geq -2 \sum_{i\geq2} G^{1i, i1} \eta_{1i1}^2=-2 \sum_{i\geq 2} G^{1i, i1} h_{1i1}^2=-2 \sum_{i\geq 2} G^{1i, i1} h_{11i}^2.$$
Combining with \eqref{2rd}, we have
\begin{equation}\label{ht-c2-06}
\begin{aligned}
F^{ii}(\log h_{11})_{ii}&=\frac{F^{ii}h_{11ii}}{h_{11}}-\frac{F^{ii}h_{11i}^2}{h_{11}^2}\\
&=\frac{F^{ii}}{h_{11}} \left( h_{ii11} + (h_{i1}^2-h_{ii}h_{11})h_{ii}+ (h_{ii}h_{11}-h_{i1}^2)h_{11} \right) -\frac{F^{ii}h_{11i}^2}{h_{11}^2}\\
&=\frac{F^{ii}}{h_{11}} h_{ii11}- F^{ii} h_{ii}^2+ \tilde{f} h_{11}-\frac{F^{ii}h_{11i}^2}{h_{11}^2}\\
&\geq - \frac{2}{h_{11}} \sum_{i\geq 2} G^{1i, i1} h_{11i}^2 +\frac{1}{h_{11}}
\sum_k h_{11k} (d_{\nu} \tilde{f})(e_k) -C_2\frac{1}{h_{11}}\\
&-\frac{F^{ii}h_{11i}^2}{h_{11}^2}-F^{ii} h_{ii}^2 -C_2h_{11}.
\end{aligned}
\end{equation}
Combining \eqref{ht-c2-02}, \eqref{ht-c2-04}, \eqref{ht-c2-05} and \eqref{ht-c2-06}, we have
\begin{equation}\label{ht-c2-07}
\begin{aligned}
0&\geq - \frac{2}{h_{11}} \sum_{i\geq 2} G^{1i, i1} h_{11i}^2 -\frac{F^{ii}h_{11i}^2}{h_{11}^2}\\
&+\frac{1}{h_{11}}
\sum_k h_{11k} (d_{\nu} \tilde{f})(e_k)-\frac{1}{u-a}\sum_k h_{kk} (d_{\nu} \tilde{f}) (e_k)\langle X, e_k\rangle \\
&+\frac{aF^{ii} h_{ii}^2}{u-a}+\frac{F^{ii}u_i^2}{(u-a)^2} +A\sum_i F^{ii}-C_3h_{11}-C_3\frac{1}{h_{11}}-AC_3.
\end{aligned}
\end{equation}
By \eqref{Codazzi} and \eqref{ht-c2-01},
\begin{equation}\label{ht-c2-08}
\begin{aligned}
&\frac{1}{h_{11}}
\sum_k h_{11k} (d_{\nu} \tilde{f})(e_k)-\frac{1}{u-a}\sum_k h_{kk} (d_{\nu} \tilde{f}) (e_k)\langle X, e_k\rangle \\
=&\sum_k \left( \frac{h_{11k}}{h_{11}}- \frac{u_k}{u-a} \right) (d_{\nu} \tilde{f})(e_k)\\
=&-A \sum_k  (d_{\nu} \tilde{f})(e_k) \langle X, e_k\rangle\\
\geq& -C_3A,
\end{aligned}
\end{equation}
which implies the inequality \eqref{ht-c2-1}.

\textbf{Step 2}:
There exists a positive constant $\delta$ such that
$$\frac{C_{n-1}^{k-1} [1-(n-2)\delta]^{k-1} -(n-1) \delta C_{n-1}^{k-2} [1+(n-2)\delta]^{k-2} }{C_n^l [1+(n-2)\delta]^l } >\frac{C_{n-1}^{k-1}}{2C_n^l}.$$

Let
$$A= \left( \|f\|_{C_0}^{1-\frac{1}{k-l}}  \frac{2k C_n^l}{(n-k+1) C_{n-1}^{k-1}} +1  \right) C_0.$$
We show that there exist constants $B_1>1$ depending on $n ,k, l, \delta$, $\inf_{M} \rho$, $\|\rho\|_{C^1}$ and $\|f\|_{C^2}$, such that
\begin{equation}
\frac{aF^{ii} h_{ii}^2}{2(u-a)}+\frac{A}{2}\sum_i F^{ii}\geq C_0h_{11},
\end{equation}
if  $h_{11} \geq B_1$.

Case 1: $|h_{ii}|\leq \delta h_{11}$ for all $i\geq 2$.\\
In this case we have
\begin{equation}\label{ht-c2-09}
|\eta_{11}| \leq (n-1) \delta h_{11}, \quad [1-(n-2)\delta]h_{11}\leq \eta_{22}\leq \cdots \leq \eta_{nn}\leq [1+(n-2)\delta]h_{11}.
\end{equation}
By the definition of $G^{ii}$ and $F^{ii}$, we obtain
\begin{equation}
\begin{aligned}
\sum_i F^{ii}&=(n-1)\sum_i G^{ii}\\
&= \frac{n-1}{k-l} \left(\frac{\sigma_k(\eta)}{\sigma_l(\eta)}\right)^{\frac{1}{k-l}-1} \frac{(n-k+1)\sigma_{k-1}(\eta)\sigma_l(\eta)-(n-l+1)\sigma_k(\eta)\sigma_{l-1}(\eta)}{\sigma_l^2(\eta)}\\
&\geq \frac{C_n^k}{C_n^{k-1}} \left(\frac{\sigma_k(\eta)}{\sigma_l(\eta)}\right)^{\frac{1}{k-l}-1} \left(\frac{\sigma_{k-1}(\eta)}{\sigma_l(\eta)}\right)  \\
&= \frac{C_n^k}{C_n^{k-1}} \left(\frac{\sigma_k(\eta)}{\sigma_l(\eta)}\right)^{\frac{1}{k-l}-1} \left(\frac{
\sigma_{k-1}(\eta|1)+\eta_{11}\sigma_{k-2}(\eta|1)}{\sigma_l(\eta)}\right)  \\
&\geq \frac{n-k+1}{k} f^{\frac{1}{k-l}-1} \frac{C_{n-1}^{k-1} [1-(n-2)\delta]^{k-1} -(n-1) \delta C_{n-1}^{k-2} [1+(n-2)\delta]^{k-2} }{C_n^l [1+(n-2)\delta]^l } h_{11}^{k-1-l}\\
&\geq f^{\frac{1}{k-l}-1} \frac{ (n-k+1)C_{n-1}^{k-1}}{2kC_n^l  } h_{11},
\end{aligned}
\end{equation}
 it implies that
$$C_0h_{11} \leq \frac{A}{2} \sum_i F^{ii}.$$

Case 2: $h_{22} > \delta h_{11}$ or $h_{nn} <- \delta h_{11}$.\\
In this case, we have
 \begin{equation}
\begin{aligned}
\frac{a F^{ii} h_{ii}^2}{2(u-a)}&\geq \frac{a}{2(\sup u-a)} \left(F^{22} h_{22}^2+F^{nn} h_{nn}^2\right)\\
&\geq  \frac{a\delta^2}{2(\sup u-a)} F^{22} h_{11}^2\\
&\geq  \frac{a\delta^2}{2n(\sup u-a)} \sum_i G^{ii}   h^2_{11} \\
&\geq  \left(\frac{C_n^k}{C_n^l}\right)^{\frac{1}{k-l}} \frac{a\delta^2 h_{11}}{2n(\sup u-a)}   h_{11}.
\end{aligned}
\end{equation}
Then, we have
$$\frac{a F^{ii} h_{ii}^2}{2(u-a)}\geq C_0 h_{11}$$
if
 $$h_{11} \geq \left(\left(\frac{C_n^k}{C_n^l}\right)^{\frac{1}{k-l}} \frac{a\delta^2}{2n(\sup u-a)} \right)^{-1}C_0.$$

\textbf{Step 3}:   We show that
$$|h_{ii}|\leq C_6 A,$$
 if  $h_{11} \geq B_1$, where $C_6$ is a constant depending on $n, k, l$, $\inf_{M} \rho$, $\|\rho\|_{C^1}$ and $\|f\|_{C^2}$.

Combining Step 1 and Step 2, we obtain
\begin{equation}\label{ht-c2-32}
\begin{aligned}
0&\geq - \frac{2}{h_{11}} \sum_{i\geq 2} G^{1i, i1} h_{11i}^2 -\frac{F^{ii}h_{11i}^2}{h_{11}^2}\\
&+\frac{aF^{ii} h_{ii}^2}{2(u-a)}+\frac{F^{ii}u_i^2}{(u-a)^2} + \frac{A}{2}\sum_i F^{ii}-C_0\frac{1}{h_{11}}-AC_0.
\end{aligned}
\end{equation}
Using \eqref{ht-c2-01}, the Concavity of $G$ and the Cauchy-Schwarz inequality,
\begin{equation*}
\begin{aligned}
0&\geq  -\frac{1+\epsilon}{(u-a)^2} F^{ii} u_i^2 - (1+ \frac{1}{\epsilon})A^2 F^{ii} \langle X, e_i\rangle^2\\
&+\frac{aF^{ii} h_{ii}^2}{2(u-a)}+\frac{F^{ii}u_i^2}{(u-a)^2} + \frac{A}{2}\sum_i F^{ii}-C_0\frac{1}{B_1}-AC_0\\
&\geq \left(\frac{a}{2(u-a)}-\frac{C_0 \epsilon}{(u-a)^2}\right) F^{ii}h^2_{ii}- \left((1+ \frac{1}{\epsilon})A^2C_0- \frac{A}{2}\right) \sum_i F^{ii}- 2AC_0 ,
\end{aligned}
\end{equation*}
where  we used $u_i=h_{ii} \langle X, e_i\rangle$ in the second inequality. Choosing $\epsilon=\frac{(u-a)a}{4C_0}$, then
\begin{equation}\label{ht-c2-31}
\begin{aligned}
0&\geq \frac{a}{4(u-a)} F^{ii}h^2_{ii}- \left((1+ \frac{4C_0}{(u-a)a})A^2C_0- \frac{A}{2}\right) \sum_i F^{ii}-2AC_0\\
&\geq \frac{a}{4(\sup u-a)} F^{ii}h^2_{ii}-\left((1+ \frac{4C_0}{a^2})A^2C_0- \frac{A}{2}\right) \sum_i F^{ii}-2AC_0.
\end{aligned}
\end{equation}
Note that $\sum_iF^{ii} = (n-1) \sum_iG^{ii} \geq (n-1) \left(\frac{C_n^k}{C_n^l}\right)^{\frac{1}{k-l}}$ and
$$F^{ii} \geq F^{22} \geq \frac{1}{n(n-1)} \sum_i F^{ii}.$$
Then \eqref{ht-c2-31} gives that
$$0\geq \frac{a}{4(\sup u-a)n(n-1)}  \left(\sum_{k\geq 2} h_{kk}^2\right) \sum_iF^{ii}-\left((1+ \frac{4C_0}{a^2})A^2C_0- \frac{A}{2}+ \frac{2C_0}{n-1} A \left(\frac{C_n^k}{C_n^l}\right)^{-\frac{1}{k-l}}\right) \sum_i F^{ii},$$
which implies that
$$\sum_{k\geq 2} h_{kk}^2 \leq C_6 A^2.$$

\textbf{Step 4}:
We show that there exists a constant $C$ depending on $n, k, l$, $\inf_{M} \rho$, $\|\rho\|_{C^1}$ and $\|f\|_{C^2}$ such that
$$h_{11}\leq C.$$
Without loss of generality, we assume that
\begin{equation}\label{ab}
  h_{11} \geq \max \left\{B_1, \left(\frac{32n C_0 A^2 (\sup u -a)}{\epsilon a}\right)^{\frac{1}{2}}, \frac{C_6 A}{\alpha}\right\},
\end{equation}
where $\alpha<1$ will be determined later.
Recalling $u_1=h_{11} \langle X, e_1\rangle$, by  \eqref{ht-c2-01} and the Cauchy-Schwarz inequality, we have
\begin{equation}
\begin{aligned}
\frac{F^{11} h_{111}^2}{h_{11}^2}&\leq \frac{1+\epsilon}{(u-a)^2} F^{11} u_1^2+(1+\frac{1}{\epsilon}) A^2 F^{11}\langle X, e_1\rangle^2\\
&\leq \frac{F^{11} u_1^2}{(u-a)^2} +\frac{C_0\epsilon F^{11} h_{11}^2}{(u-a)^2}  +(\frac{1+\epsilon}{\epsilon}) C_0A^2 F^{11}.
\end{aligned}
\end{equation}
We choose $\epsilon$ sufficienly small such that
$$\frac{F^{11} h_{111}^2}{h_{11}^2}\leq \frac{F^{11} u_1^2}{(u-a)^2} +\frac{a F^{ii} h_{ii}^2}{16(u-a)}  +\frac{2C_0A^2 F^{11}}{\epsilon}.$$
 Hence Combining with Step 3 and \eqref{ab}, we know that
\begin{equation}\label{ht-c2-41}
\begin{aligned}
\frac{F^{11} h_{111}^2}{h_{11}^2}\leq \frac{F^{11} u_1^2}{(u-a)^2} +\frac{a F^{ii} h_{ii}^2}{8(u-a)}
\end{aligned}
\end{equation}
and
$$|h_{ii}|\leq \alpha h_{11}, \quad \forall i\geq 2.$$
Thus
$$\frac{1}{h_{11}}\leq \frac{1+\alpha}{h_{11}-h_{ii}}.$$
Combining with Proposition \ref{th-lem-07}, we obtain
\begin{equation}\label{ht-c2-42}
\begin{aligned}
\sum_{i\geq 2} \frac{F^{ii}h_{11i}^2}{h_{11}^2}&=\sum_{i\geq 2} \frac{F^{ii}-F^{11}}{h_{11}^2} h_{11i}^2 +\sum_{i\geq 2} \frac{F^{11}h_{11i}^2}{h_{11}^2}\\
&\leq \frac{1+\alpha}{h_{11}} \sum_{i\geq 2} \frac{F^{ii}-F^{11}}{h_{11}-h_{ii}} h_{11i}^2+\sum_{i\geq 2} \frac{F^{11}h_{11i}^2}{h_{11}^2}\\
&=\frac{1+\alpha}{h_{11}} \sum_{i\geq 2} \frac{G^{11}-G^{ii}}{\eta_{ii}-\eta_{11}} h_{11i}^2+\sum_{i\geq 2} \frac{F^{11}h_{11i}^2}{h_{11}^2}\\
&=-\frac{1+\alpha}{h_{11}} \sum_{i\geq 2}G^{1i, i1} h_{11i}^2 +\sum_{i\geq 2} \frac{F^{11}h_{11i}^2}{h_{11}^2}.
\end{aligned}
\end{equation}
Using  \eqref{ht-c2-01},  Cauchy-Schwarz ineuqlity and the fact $u_i=h_{ii}\langle X, e_i \rangle$, we have
\begin{equation}\label{ht-c2-43}
\begin{aligned}
\sum_{i\geq 2} \frac{F^{11}h_{11i}^2}{h_{11}^2}&\leq 2\sum_{i\geq 2} \frac{F^{11}u_i^2}{(u-a)^2}+2A^2\sum_{i\geq 2} F^{11}  \langle X, e_i \rangle^2\\
&\leq 2\frac{C_0}{a^2} \sum_{i\geq 2} \frac{aF^{11}h_{ii}^2}{(u-a)}+2n C_0A^2 F^{11}  \\
&\leq \alpha^2 \frac{2nC_0}{a^2}  \frac{aF^{11}h_{11}^2}{(u-a)}+
\frac{\epsilon a}{16 (\sup u-a)} F^{11}h_{11}^2.
\end{aligned}
\end{equation}
We choose $\alpha$ sufficiently small such that $ \alpha \leq \min \left\{\sqrt{\frac{a^2}{32n C_0}}, 1\right\}$,  \eqref{ht-c2-42} and \eqref{ht-c2-43} implies that
\begin{equation}\label{ht-c2-44}
\begin{aligned}
\sum_{i\geq 2} \frac{F^{ii}h_{11i}^2}{h_{11}^2}
\leq-\frac{2}{h_{11}} \sum_{i\geq 2}G^{1i, i1} h_{11i}^2 +\frac{a F^{11}h_{11}^2}{8 ( u-a)}.
\end{aligned}
\end{equation}
Substituting  \eqref{ht-c2-41}  and  \eqref{ht-c2-44} into \eqref{ht-c2-32}, we obtian that
\begin{equation}
\begin{aligned}
0&\geq \frac{a F^{ii}h_{ii}^2}{4 ( u-a)}+ \frac{A}{2} \sum_i F^{ii} -C_0(A+1)\\
&\geq \frac{C_0}{2}h_{11}-C_0(A+1),
\end{aligned}
\end{equation}
which implies that
$$h_{11}\leq 2(A+1).$$
\end{proof}


\section{The proof of  Theorem  \ref{Main}}

In this section, we use the degree theory for nonlinear elliptic
equation developed in \cite{Li89} to prove Theorem \ref{Main}. The
proof here is similar to \cite{An, Jin, Li-Sh}. So, only sketch will
be given below.

After establishing the  priori estimates in Theorem \ref{ht-C0},
Theorem \ref{ht-C1e} and Theorem \ref{ht-c2-1}, we know that the
equation \eqref{ht-Eq} is uniformly elliptic. From \cite{Eva82},
\cite{Kry83}, and Schauder estimates, we have
\begin{eqnarray}\label{C2+}
|\rho|_{C^{4,\alpha}(\mathbb{S}^n)}\leq C
\end{eqnarray}
for any $(\eta, k)$-convex solution $M$ to the equation \eqref{ht-Eq}, where
the position vector of $M$ is $X=\rho(x)x$ for $x \in \mathbb{S}^n$.
We define
\begin{eqnarray*}
C_{0}^{4,\alpha}(\mathbb{S}^n)=\{\rho \in
C^{4,\alpha}(\mathbb{S}^n): M \ \mbox{is}
 \ (\eta, k)-\mbox{convex}\}.
\end{eqnarray*}
Let us consider $$F(.; t): C_{0}^{4,\alpha}(\mathbb{S}^n)\rightarrow
C^{2,\alpha}(\mathbb{S}^n),$$ which is defined by
\begin{eqnarray*}
F(\rho, x; t)=\frac{\sigma_k(\lambda(\eta))}{\sigma_l(\lambda(\eta))}-f^t(X, \nu),
\end{eqnarray*}
where
\begin{eqnarray*}
f^t(X, \nu)=tf(X, \nu)+ (1-t)  \frac{C_n^k}{C_n^l}  (n-1)^{k-l} \left( \frac{1}{|X|^{k-l}} +\epsilon (\frac{1}{|X|^{k-l}}-1)\right),
\end{eqnarray*}
where the constant $\epsilon$ is small sufficiently such that
\begin{eqnarray*}
\min_{r_1\leq \rho\leq r_2}\left( \frac{1}{\rho^{k-l}} +\epsilon (\frac{1}{\rho^{k-l}}-1)\right)\geq c_0>0
\end{eqnarray*}
for some positive constant $c_0$.
Let $$\mathcal{O}_R=\{\rho \in C_{0}^{4,\alpha}(\mathbb{S}^n):
|\rho|_{C^{4,\alpha}(\mathbb{S}^n)}<R\},$$ which clearly is an open
set of $C_{0}^{4,\alpha}(\mathbb{S}^n)$. Moreover, if $R$ is
sufficiently large, $F(\rho, x; t)=0$ has no solution on $\partial
\mathcal{O}_R$ by the prior estimate established in \eqref{C2+}.
Therefore the degree $\deg(F(.; t), \mathcal{O}_R, 0)$ is
well-defined for $0\leq t\leq 1$. Using the homotopic invariance of
the degree, we have
\begin{eqnarray*}
\deg(F(.; 1), \mathcal{O}_R, 0)=\deg(F(.; 0), \mathcal{O}_R, 0).
\end{eqnarray*}
Theorem \ref{Uni} shows that $\rho_0=1$ is the unique
solution to the above equation for $t=0$. Direct calculation show
that
\begin{eqnarray*}
F(s, x; 0)= -\epsilon \frac{C_n^k}{C_n^l}  (n-1)^{k-l} \left( \frac{1}{s^{k-l}} -1\right).
\end{eqnarray*}
Then
\begin{eqnarray*}
\delta_{\rho_0}F(\rho_0, x; 0)=\frac{d}{d s}|_{s=1}F(s\rho_0, x;
0)=\epsilon \frac{C_n^k}{C_n^l}  (n-1)^{k-l}(k-l)>0,
\end{eqnarray*}
where $\delta F(\rho_0, x; 0)$ is the linearized operator of $F$ at
$\rho_0$. Clearly, $\delta F(\rho_0, x; 0)$ takes the form
\begin{eqnarray*}
\delta_{w}F(\rho_0, x; 0)=-a^{ij}w_{ij}+b^i
w_i+\epsilon \frac{C_n^k}{C_n^l}  (n-1)^{k-l}(k-l),
\end{eqnarray*}
where $a^{ij}$ is a positive definite matrix. Since
$\epsilon \frac{C_n^k}{C_n^l}  (n-1)^{k-l}(k-l)>0,$
thus $\delta_{\rho_0} F(\rho_0, x; 0)$ is an invertible operator. Therefore,
\begin{eqnarray*}
\deg(F(.; 1), \mathcal{O}_R; 0)=\deg(F(.; 0), \mathcal{O}_R, 0)=\pm
1.
\end{eqnarray*}
So, we obtain a solution at $t=1$. This completes the proof of
Theorem \ref{Main}.

\bigskip

\bigskip

\end{document}